\def\BibTeX{{\rm B\kern-.05em{\sc i\kern-.025em b}\kern-.08em
    T\kern-.1667em\lower.7ex\hbox{E}\kern-.125emX}}
\newcommand{\mf}{\mathbf}
\newcommand{\cov}{\normalfont\textsf{\footnotesize cov}}
\newcommand{\sg}{\normalfont\textsf{\footnotesize sgn}}
\newcommand{\vect}{\normalfont\textsf{\footnotesize vec}}
\newcommand{\diag}{\normalfont\textsf{\footnotesize   diag}}
\newcommand{\col}{\normalfont\textsf{\footnotesize   col}}
\newtheorem{corollary}{Corollary}
\newtheorem{lemma}{Lemma}
\newtheorem{assumption}{Assumption}
\newtheorem{remark}{Remark}
\newtheorem{theorem}{Theorem}
\title{A Note on Optimal Distributed State Estimation \\ for Linear Time-Varying Systems\footnote{This work was supported via projects PID2021-124137OB-I00 and TED2021-130224B-I00 funded by MCIN/AEI/10.13039/501100011033, by ERDF A way of making Europe and by the European Union NextGenerationEU/PRTR, by the Gobierno de Aragón under Project DGA T45-23R, by the Universidad de Zaragoza and Banco Santander, by the Consejo Nacional de Ciencia y Tecnología (CONACYT-Mexico) with grant number 739841, and by Spanish grant FPU20/03134.}}
\author{Irene Perez Salesa, Rodrigo Aldana-López, Carlos Sagüés\footnote{The authors are with Departamento de Informática e Ingeniería de Sistemas (DIIS) and Instituto de Investigación en Ingeniería de Aragón (I3A), Universidad de Zaragoza, María de Luna 1, 50018, Zaragoza, Spain (e-mails: i.perez@unizar.es, rodrigo.aldana.lopez@gmail.com, csagues@unizar.es).}}
\date{}
\begin{document}

\maketitle

\begin{abstract}
\noindent In this technical note, we prove that the ODEFTC algorithm constitutes the first optimal distributed state estimator for continuous-time linear time-varying systems subject to stochastic disturbances. Particularly, we formally show that it is able to asymptotically recover the performance, in terms of error covariance of the estimates at each node, of the centralized Kalman-Bucy filter, which is known to be the optimal filter for the considered class of systems. Moreover, we provide a simple sufficient value for the consensus gain to guarantee the stability of the distributed estimator.

\noindent \textit{Keywords:} Distributed state estimation, Kalman filtering, sensor networks, multi-agent systems, consensus, continuous-time stochastic systems.
\end{abstract}

\section{Introduction}

This work deals with distributed state estimation in sensor networks. This topic has attracted considerable attention in recent years, having applications in fields such as monitoring and control of large-scale systems, target tracking or navigation \cite{He2020}.
Particularly, we consider continuous-time linear systems subject to stochastic noise. 
The goal for the nodes of the sensor network is to collaboratively obtain estimates of the state, by using their local noisy measurement and communication with neighbors. In addition, it is desirable to achieve similar performance to the optimal centralized filter for this class of systems, which is the Kalman-Bucy filter \cite{Kalman1961}.

Several works have tackled this problem through consensus-based solutions, mainly focusing on the linear time-invariant (LTI) case \cite{Ren2005, Kim2016, Wu2016, Ji2017, Battilotti2020, ifac2023, ecc24}. For linear time-varying (LTV) systems, \cite{Olfati-Saber2007} proposes a solution under a restrictive observability assumption, while \cite{Ren2018} provides a solution under collective observability. However, this proposal only recovers the solution of the centralized filter in absence of noise, which contradicts the usual assumptions in the context of Kalman filtering. To solve this problem, the ODEFTC algorithm, which stands for ``Optimal Distributed Estimation based on Fixed-Time Consensus", was introduced in \cite{odeftc}. This algorithm generalizes the proposal from \cite{Battilotti2020} to the LTV case. Despite simulation experiments validating that ODEFTC recovers a similar performance to the optimal centralized filter at each node, this aspect was only formally proven for the LTI case, based on results from \cite{Battilotti2020}. Therefore, analyzing the optimality of ODEFTC in the LTV case remains an open problem, which we address in this technical note. Furthermore, we provide a sufficient choice of the consensus gain to guarantee the stability of the estimator in the LTV context.

\section{Preliminaries}

\subsection{Notation}
The $n \times n$ identity matrix is represented with $\mf{I}_n$, the $n \times 1$ vector of ones is $\mathds{1}_n$ and the $n \times n$ matrix of ones is $\mf{U}_n$. $\mf{A} \succ (\succeq)\ \mf{0}$ means that $\mf{A}$ is positive definite (semidefinite, respectively). The notation $\| \bullet \|$ represents the Euclidean norm when applied to a vector, and the induced norm when applied to a matrix. Expectation and covariance are denoted by $\mathds{E}\{\bullet \}$ and $\cov\{\bullet\}$. The notation $\diag_{i=1}^N(\bullet_i)$ denotes the block-diagonal composition of matrices indexed by $i=1,\dots,N$, and $\col_{i=1}^N(\bullet_i)$ is their column composition, while $\vect(\bullet)$ is the vectorization operator, which transforms an $n \times m$ matrix into a $nm \times 1$ vector. The maximum and minimum eigenvalues of a matrix are denoted with $\lambda_{\max}(\bullet)$ and $\lambda_{\min}(\bullet)$.

\subsection{The ODEFTC Algorithm}
The ODEFTC algorithm provides a distributed filter for sensor networks observing a dynamical system affected by stochastic noise, described by
\begin{equation}\label{eq:sys}
    \dot{\mf{x}}(t) = \mf{A}(t) \mf{x}(t) + \mf{w}(t), \ t \geq 0,
\end{equation}
where $\mf{x}(t) \in \mathbb{R}^n$ is the state vector and $\mf{w}(t) \in \mathbb{R}^{n}$ is zero-mean Gaussian noise with covariance $\mf{W}(t) \succeq \mf{0}$. The initial state is normally distributed, with mean $\mf{x}_0$ and covariance $\mf{P}_0$.

The sensor network is formed by $N$ nodes, with its communication described by an undirected, connected graph $\mathcal{G} = (\mathcal{V}, \mathcal{E})$. The set $\mathcal{V} =\{1, \dots, N\}$ contains the nodes, while $\mathcal{E}\subseteq \mathcal{V} \times \mathcal{V}$ is the edge set representing the communication links. The graph topology is described by its Laplacian matrix $\mf{Q}_\mathcal{G}$, with $\lambda_\mathcal{G}$ being its algebraic connectivity. The set of neighbors of an arbitrary node $i$ is $\mathcal{N}_i = \{j \in \mathcal{V} \ : \ (i,j) \in \mathcal{E}\}$.
Each node $i$ takes a measurement $\mf{y}_i(t) \in \mathbb{R}^{n_{\mf{y}_i}}$ given by
\begin{equation}\label{eq:meas}
    \mf{y}_i(t) = \mf{C}_i(t) \mf{x}(t) + \mf{v}_i(t), \ \forall t \geq 0,
\end{equation}
with $\mf{v}_i(t) \in \mathbb{R}^{n_{\mf{y}_i}}$ being white Gaussian noise with covariance $\mf{R}_i(t) \succ \mf{0}$.

The goal for each sensor node is to obtain an estimate $\hat{\mf{x}}_i(t) \in \mathbb{R}^n$ of the state $\mf{x}(t)$ and an estimated error covariance matrix $\mf{P}_i(t)$, using its measurement $\mf{y}_i(t)$ and communication with neighboring nodes. Then, the ODEFTC algorithm run by each node $i \in \mathcal{V}$ is given by
\begin{subequations}\label{eq:dkb}
\begin{align}
    \mf{K}_i(t) &= N \mf{P}_i(t) \mf{C}_i(t)^\top \mf{R}_i(t)^{-1} \label{eq:dkb-k}\\
    \dot{\hat{\mf{x}}}_i(t) &= \mf{A}(t)\hat{\mf{x}}_i(t) + \mf{K}_i(t) \left(\mf{y}_i(t) - \mf{C}_i(t)\hat{\mf{x}}_i(t)\right)\nonumber \\&\quad + \kappa \mf{P}_i(t) \textstyle \sum_{j\in \mathcal{N}_i} (\hat{\mf{x}}_j(t) - \hat{\mf{x}}_i(t))\label{eq:dkb-x}\\ 
    \dot{\mf{P}}_i(t) &= \mf{A}(t)\mf{P}_i(t) + \mf{P}_i(t) \mf{A}(t)^\top + \mf{W}(t)      - \mf{P}_i(t)\hat{\mf{Z}}_i(t)\mf{P}_i(t) \label{eq:dkb-p}\\
    \hat{\mf{Z}}_i(t) &= N\mf{C}_i(t)^\top \mf{R}_i(t)^{-1}\mf{C}_i(t) - \mf{Q}_i(t) \label{eq:dkb-z1}\\
    \dot{\mf{Q}}_i(t) &= \alpha \textstyle \sum_{j\in \mathcal{N}_i} \phi(\hat{\mf{Z}}_i(t) - \hat{\mf{Z}}_j(t), \xi, \gamma),\label{eq:dkb-z2}
\end{align}
\end{subequations}
where the function
$\phi(\bullet, \xi, \gamma) = (|\bullet |^{1-\gamma} + |\bullet |^{1+\gamma} + \xi)\sg(\bullet)$
is applied element-wise to the matrices, noting that $\sg$ is the ``sign" operator, $\kappa, \alpha, \gamma, \xi > 0$ are design parameters, and \eqref{eq:dkb-z1}-\eqref{eq:dkb-z2} constitutes the auxiliary fixed-time consensus algorithm on which the estimator is based. The initialization at $t=0$ fulfills $\sum_{i=1}^N \mf{Q}_i(0) = \mf{0}$, which can be easily complied if $\mf{Q}_i(0) = \mf{0}, \, \forall i \in \mathcal{V}$, and, ideally, $\hat{\mf{x}}_i(0) = \mf{x}_0$, $\mf{P}_i(0) = \mf{P}_0$, although a different initialization can be used if $\mf{x}_0, \, \mf{P}_0$ are unknown in practice. The nodes communicate the variables $\hat{\mf{x}}_i(t)$ and $\hat{\mf{Z}}_i(t)$ with their neighbors.

\subsection{Assumptions}

We recall the following assumptions for ODEFTC \cite{odeftc}. 
\begin{assumption}\label{assum:observable}
    The pair $(\mf{A}(t), \mf{C}(t))$ is uniformly completely observable, with $\mf{C}(t) = \col_{i=1}^N(\mf{C}_i(t))$ $,\forall t\geq 0$.
\end{assumption}
\begin{assumption}\label{assum:bounds}
    There exist constants $0\leq a,\, c,\, w_1 \leq w_2,\, r_1 \leq r_2$ such that $\|\mf{A}(t)\|\leq a,\, \|\mf{C}(t)\|\leq c,\, w_1\leq\|\mf{W}(t)\|\leq w_2,\, r_1\leq \|\mf{R}(t)\|\leq r_2$, $\forall t\geq 0$.    
\end{assumption}
\begin{assumption}\label{assum:bounds-Z}
    Letting $\mf{Z}_i(t) = N\mf{C}_i(t)^\top \mf{R}_i(t)^{-1}\mf{C}_i(t)$, there exists a known $L \geq 0$ such that $\| \dot{\mf{Z}}_i(t)\| \leq L, \ \forall t\geq 0$.
\end{assumption}
Note that Assumptions \ref{assum:observable} and \ref{assum:bounds} are commonly required for Kalman filtering, while the constraint in Assumption \ref{assum:bounds-Z} is given in aggregate form for convenience and can be reasonably fulfilled in many cases. For example, it holds trivially for LTI systems and for LTV systems with a static sensor model.

\subsection{Optimal Centralized Filter}
Let $\mf{y}(t) = \col_{i=1}^N(\mf{y}_i(t)) = \mf{C}(t) \mf{x}(t) + \mf{v}(t)$ be the aggregate measurement from all the nodes, with $\mf{C}(t)$ from Assumption \ref{assum:observable} and $\mf{v}(t) = \col_{i=1}^N(\mf{v}_i(t))$ being the aggregate noise vector with covariance $\mf{R}(t) = \diag_{i=1}^N(\mf{R}_i(t))$.
Then, the optimal centralized solution to obtain minimum mean-square-error estimates of system \eqref{eq:sys} under the measurement $\mf{y}(t)$ is given by the Kalman-Bucy filter \cite{Kalman1961},
\begin{subequations}\label{eq:ckb}
\begin{align}
    \mf{K}(t) &= \mf{P}(t) \mf{C}(t)^\top \mf{R}(t)^{-1}\label{eq:ckb-k} \\
    \dot{\hat{\mf{x}}}(t) &= \mf{A}(t)\hat{\mf{x}}(t) + \mf{K}(t) \left(\mf{y}(t) - \mf{C}(t)\hat{\mf{x}}(t)\right) \label{eq:ckb-x}\\
    \dot{\mf{P}}(t) &= \mf{A}(t) \mf{P}(t) + \mf{P}(t) \mf{A}(t)^\top + \mf{W}(t)  - \mf{P}(t)\mf{C}(t)^\top\mf{R}(t)^{-1}\mf{C}(t)\mf{P}(t) \label{eq:ckb-p}
\end{align}
\end{subequations}
where $\mf{K}(t)$ is the filtering gain, $\hat{\mf{x}}(t)$ is the state estimate and $\mf{P}(t) = \cov\{\mf{x}(t) - \hat{\mf{x}}(t)\}$ is its error covariance matrix.

\begin{remark}\label{rem:p-ckb}
    Assumptions \ref{assum:observable} and \ref{assum:bounds} imply that $\mf{P}(t)$ is uniformly upper bounded \cite[Lemma 3.2]{Anderson1971}. 
    Moreover, $\mf{P}(t) \succ \mf{0}$ generally holds, since having a singular matrix $\mf{P}(t)$ implies that some of the states can be known exactly, with no need to estimate them (see additional discussions in \cite{Anderson1971}). Then, $\mf{P}(t)^{-1}$ exists and also satisfies a Riccati equation, which means that its upper bound can be found by the same arguments, so that $p_1 \mf{I}_n \preceq \mf{P}(t) \preceq p_2 \mf{I}_n, \, \forall t \geq 0,$ for some constants $p_1,\, p_2 > 0$. Thus, functions of the form $V(\mf{e}(t)) = \mf{e}(t)^\top \mf{P}(t) \mf{e}(t)$ are valid Lyapunov functions.
\end{remark}

\subsection{Auxiliary Results}

We summarize some results for the convergence of equations \eqref{eq:dkb-p} and \eqref{eq:dkb-z1}-\eqref{eq:dkb-z2} in the ODEFTC algorithm, which are relevant to prove its estimation performance in the LTV case. Their proofs are given in \cite{odeftc}.

\begin{lemma}\label{lem:Z} {Let Assumption \ref{assum:bounds-Z} hold. Consider a graph $\mathcal{G}$ with algebraic connectivity $\lambda_\mathcal{G}$, $N$ nodes and $\ell$ edges. 
For given design parameters $\alpha > 0$ and $\gamma \in (0,1)$, set $\xi \geq 2L/(\alpha \sqrt{\lambda_{\mathcal{G}}})$ and $T_{\max}=\ell \pi/(\alpha \gamma \lambda_{\mathcal{G}})$. Then, the consensus protocol \eqref{eq:dkb-z1}-\eqref{eq:dkb-z2} fulfills $\hat{\mf{Z}}_i(t) = \bar{\mf{Z}}(t) \ \forall t \geq T_{\max}$}, where $\bar{\mf{Z}}(t) = \mf{C}(t)^\top\mf{R}(t)^{-1}\mf{C}(t)$.
\end{lemma}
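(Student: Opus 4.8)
The plan is to split the argument into an invariance part, which pins down the consensus value, and a fixed-time attractivity part for the resulting disagreement dynamics. Since $\phi(\bullet,\xi,\gamma)$ acts element-wise and is odd in its first argument (the even factor $|\bullet|^{1-\gamma}+|\bullet|^{1+\gamma}+\xi$ multiplies the odd $\sg(\bullet)$), while $\mathcal{G}$ is undirected, the double sum in $\sum_{i=1}^N\dot{\mf{Q}}_i(t)=\alpha\sum_{i=1}^N\sum_{j\in\mathcal{N}_i}\phi(\hat{\mf{Z}}_i(t)-\hat{\mf{Z}}_j(t),\xi,\gamma)$ cancels edge-by-edge, so $\sum_{i=1}^N\dot{\mf{Q}}_i(t)=\mf{0}$. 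Together with the initialization $\sum_{i=1}^N\mf{Q}_i(0)=\mf{0}$ this gives $\sum_{i=1}^N\mf{Q}_i(t)=\mf{0}$ for all $t$, whence $\tfrac1N\sum_{i=1}^N\hat{\mf{Z}}_i(t)=\tfrac1N\sum_{i=1}^N\mf{Z}_i(t)=\mf{C}(t)^\top\mf{R}(t)^{-1}\mf{C}(t)=\bar{\mf{Z}}(t)$. Hence the running average is exactly $\bar{\mf{Z}}(t)$ at all times, and it suffices to prove that the disagreement $\hat{\mf{Z}}_i(t)-\bar{\mf{Z}}(t)$ is driven to zero by $t=T_{\max}$.

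Because $\phi$ is element-wise, I would then decouple the matrix recursion into independent scalar problems, one per entry, and stack a fixed entry into $\boldsymbol{z}=\col_{i=1}^N(z_i)$ with disagreement $\boldsymbol{\delta}=\boldsymbol{z}-\bar z\mathds{1}_N$, $\bar z=\tfrac1N\sum_{i=1}^N z_i$. Using the incidence/Laplacian structure, $\dot{\boldsymbol{\delta}}$ consists of the nonlinear consensus term and a bounded drift $\dot z_i-\dot{\bar z}$, the latter of magnitude at most $2L$ entry-wise by Assumption \ref{assum:bounds-Z}. A norm-type (degree-one homogeneous) Lyapunov function $V=(\boldsymbol{\delta}^\top\mf{Q}_\mathcal{G}\boldsymbol{\delta})^{1/2}$ is the natural candidate, since $\boldsymbol{\delta}^\top\mf{Q}_\mathcal{G}\boldsymbol{\delta}\ge\lambda_\mathcal{G}\|\boldsymbol{\delta}\|^2$ on the disagreement subspace $\boldsymbol{\delta}\perp\mathds{1}_N$. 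Differentiating $V$ along the dynamics, the sub-linear $|\cdot|^{1-\gamma}$ and super-linear $|\cdot|^{1+\gamma}$ parts of $\phi$ yield strictly negative edge-indexed contributions of homogeneity degrees $1-\gamma$ and $1+\gamma$ in $V$, while the constant $\xi$ part contributes a non-vanishing negative term providing the robustness margin against the drift.

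The crux is to show that $\xi\ge 2L/(\alpha\sqrt{\lambda_\mathcal{G}})$ makes this $\xi$-term dominate the $2L$ drift uniformly in $t$. Because $\sg$ makes the right-hand side discontinuous, this step is carried out in the Filippov sense: the $\xi$-term acts as a sliding-mode correction that enforces exact agreement with the moving target $\bar{\mf{Z}}(t)$ rather than mere asymptotic tracking, and the factor $\sqrt{\lambda_\mathcal{G}}$ is precisely the constant relating $\|\boldsymbol{\delta}\|$ to the $\mf{Q}_\mathcal{G}$-weighted edge differences appearing in $\dot V$. Once the drift is dominated, the homogeneous terms leave a differential inequality of fixed-time type $\dot V\le -c_1 V^{1-\gamma}-c_2 V^{1+\gamma}$ with $c_1,c_2>0$ depending on $\alpha$, $\lambda_\mathcal{G}$ and the number of edges $\ell$ through the norm equivalences on the $\ell$-dimensional edge space. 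Integrating $\int_0^\infty dV/(c_1 V^{1-\gamma}+c_2 V^{1+\gamma})$ via the substitution $u=V^{\gamma}$ reduces it to an $\arctan$ bounded by $\pi/2$ independently of the initial disagreement; substituting the constants and taking the worst entry yields the uniform settling time $T_{\max}=\ell\pi/(\alpha\gamma\lambda_\mathcal{G})$.

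I expect the main obstacle to be the interaction between the discontinuity and the time-varying target. One must establish not merely finite-time but fixed-time reaching of the consensus manifold, and then verify that the sliding motion maintains the \emph{exact} equality $\hat{\mf{Z}}_i(t)=\bar{\mf{Z}}(t)$ for all $t\ge T_{\max}$, rather than allowing the disagreement to reappear as $\bar{\mf{Z}}(t)$ drifts. This demands a non-smooth Lyapunov analysis and careful tracking of the graph constants in order to obtain the sharp thresholds on $\xi$ and $T_{\max}$ instead of the mere existence of some finite settling time.
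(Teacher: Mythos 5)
A preliminary remark: the paper itself contains no proof of Lemma \ref{lem:Z} --- it is imported from \cite{odeftc} (``Their proofs are given in \cite{odeftc}''), so your attempt can only be judged against what any complete proof must deliver, namely the two sharp constants appearing in the statement.

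Your first stage is correct and complete: since $\phi(\bullet,\xi,\gamma)$ is odd and $\mathcal{G}$ is undirected, the double sum cancels edge-by-edge, so $\sum_{i=1}^N\dot{\mf{Q}}_i(t)=\mf{0}$; with the initialization $\sum_{i=1}^N\mf{Q}_i(0)=\mf{0}$ this gives $\frac1N\sum_{i=1}^N\hat{\mf{Z}}_i(t)=\frac1N\sum_{i=1}^N\mf{Z}_i(t)=\mf{C}(t)^\top\mf{R}(t)^{-1}\mf{C}(t)=\bar{\mf{Z}}(t)$ for all $t$, and the element-wise reduction to scalar consensus problems with bounded drift is legitimate. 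The gap is that everything after this point --- which is where the lemma's actual content lies --- is asserted rather than derived, and your specific choices do not obviously deliver the stated constants. First, the Lyapunov function $V=(\boldsymbol{\delta}^\top\mf{Q}_\mathcal{G}\boldsymbol{\delta})^{1/2}$ does not support your claim of ``strictly negative edge-indexed contributions'': writing $\mf{Q}_\mathcal{G}=\mf{B}\mf{B}^\top$ with $\mf{B}$ the incidence matrix and $\mf{w}=\mf{B}^\top\boldsymbol{\delta}$, the consensus term in $\dot V$ is $-\alpha\,\mf{w}^\top(\mf{B}^\top\mf{B})\phi(\mf{w})/\|\mf{w}\|$, and the edge Laplacian $\mf{B}^\top\mf{B}$ is \emph{not} diagonal, so this quantity does not decompose per edge; cross terms between adjacent edges have indefinite sign, and establishing its negativity with quantifiable constants is precisely the hard step you have silently assumed. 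Second, the route that genuinely does decompose per edge, $V=\frac12\|\boldsymbol{\delta}\|^2$ with $\dot V=\boldsymbol{\delta}^\top\dot{\mf{z}}-\alpha\sum_e\left(|w_e|^{2-\gamma}+|w_e|^{2+\gamma}+\xi|w_e|\right)$, does not reproduce the stated threshold: bounding the drift by Cauchy--Schwarz gives $|\boldsymbol{\delta}^\top\dot{\mf{z}}|\le L\sqrt{N}\,\|\boldsymbol{\delta}\|$, while the $\xi$-term yields $\alpha\xi\|\mf{w}\|_1\ge\alpha\xi\sqrt{\lambda_\mathcal{G}}\|\boldsymbol{\delta}\|$, so domination requires $\xi\ge L\sqrt{N}/(\alpha\sqrt{\lambda_\mathcal{G}})$ --- worse by a factor growing with $N$ than the lemma's $2L/(\alpha\sqrt{\lambda_\mathcal{G}})$. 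Likewise, the constants $c_1,c_2$ and the factor $\ell$ in $T_{\max}=\ell\pi/(\alpha\gamma\lambda_\mathcal{G})$ are never computed; the arctan integration is fine, but it only converts constants you have not established into a settling time. Since the lemma is exactly a statement about these constants (existence of \emph{some} sufficiently large $\xi$ and finite settling time would be a much weaker claim), the core of the proof is missing --- as your own closing paragraph essentially concedes. A complete argument must work where the factor $2$ actually lives (the per-edge drift $|\dot z_i-\dot z_j|\le 2L$), handle the edge-Laplacian coupling explicitly, and carry out the Filippov sliding analysis that keeps $\hat{\mf{Z}}_i(t)=\bar{\mf{Z}}(t)$ invariant after reaching.
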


\begin{theorem}\label{th:p-inf}
Let Assumptions \ref{assum:observable}, \ref{assum:bounds} and \ref{assum:bounds-Z} hold along with the conditions from Lemma \ref{lem:Z}. The estimated error covariance matrix $\mf{P}_i(t)$ given by \eqref{eq:dkb-p} complies with the following properties:
\begin{enumerate}
    \item \label{it:p-bounded} $\mf{P}_i(t)$ is uniformly bounded $\forall t\geq 0$.
    \item \label{it:p-after-tmax} {After a fixed time $T_{\max}$, $\mf{P}_i(t)$ evolves as \eqref{eq:ckb-p} from the optimal centralized filter.}
    \item \label{it:p-lim-ckb} $\lim_{t\to\infty}(\mf{P}_i(t) - {\mf{P}}(t))=\mf{0}$  with $\mf{P}(t)$ from the optimal centralized filter \eqref{eq:ckb-p}.
\end{enumerate}
\end{theorem}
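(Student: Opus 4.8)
The plan is to treat the three claims in the order \ref{it:p-after-tmax}, \ref{it:p-bounded}, \ref{it:p-lim-ckb}, since the later parts build on the earlier ones. Claim \ref{it:p-after-tmax} is immediate from Lemma \ref{lem:Z}: for every $t \geq T_{\max}$ the consensus has converged, so $\hat{\mf{Z}}_i(t) = \bar{\mf{Z}}(t) = \mf{C}(t)^\top \mf{R}(t)^{-1}\mf{C}(t)$, and substituting this identity into \eqref{eq:dkb-p} turns the term $\mf{P}_i \hat{\mf{Z}}_i \mf{P}_i$ into $\mf{P}\mf{C}^\top\mf{R}^{-1}\mf{C}\mf{P}$, so \eqref{eq:dkb-p} reduces verbatim to the centralized Riccati equation \eqref{eq:ckb-p}. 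No computation is required.

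For the boundedness claim \ref{it:p-bounded} I would split the time axis at $T_{\max}$. On $[T_{\max},\infty)$, claim \ref{it:p-after-tmax} says $\mf{P}_i$ obeys \eqref{eq:ckb-p}, so it is uniformly upper bounded by Remark \ref{rem:p-ckb}, and bounded below by the same argument applied to $\mf{P}_i^{-1}$ provided $\mf{P}_i(T_{\max})\succ\mf{0}$. The delicate part is the compact transient $[0,T_{\max}]$, where $\hat{\mf{Z}}_i = \mf{Z}_i - \mf{Q}_i$ need not be positive semidefinite, so the sign-indefinite quadratic term could \emph{a priori} drive $\mf{P}_i$ to a finite escape. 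Here I would first use the fixed-time consensus analysis underlying Lemma \ref{lem:Z} (from \cite{odeftc}) to conclude that $\mf{Q}_i$, hence $\hat{\mf{Z}}_i$, remains bounded on the compact interval, and then bound $\|\mf{P}_i(t)\|$ by comparison: writing $\hat{\mf{Z}}_i \succeq -\mu\mf{I}_n$ and invoking Assumption \ref{assum:bounds}, the symmetric solution satisfies a scalar Riccati-type differential inequality whose solution survives past $T_{\max}$, ruling out escape on the transient. This escape-time control is the main technical obstacle, as it is the one non-classical ingredient of the theorem.

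The core is the convergence claim \ref{it:p-lim-ckb}. For $t \geq T_{\max}$ both $\mf{P}_i$ and $\mf{P}$ solve the \emph{same} Riccati equation \eqref{eq:ckb-p}, differing only in their value at $T_{\max}$, so the statement is that this equation forgets its initial condition. Setting $\mf{\Delta}(t) = \mf{P}_i(t) - \mf{P}(t)$ and subtracting the two Riccati equations, the two quadratic terms combine into a bilinear expression in $\mf{\Delta}$, giving the linear matrix differential equation
\begin{equation}
\dot{\mf{\Delta}} = (\mf{A} - \mf{P}_i\bar{\mf{Z}})\mf{\Delta} + \mf{\Delta}(\mf{A} - \mf{P}\bar{\mf{Z}})^\top,
\end{equation}
whose solution is $\mf{\Delta}(t) = \mf{\Phi}_i(t,T_{\max})\,\mf{\Delta}(T_{\max})\,\mf{\Phi}(t,T_{\max})^\top$, with $\mf{\Phi}_i$ and $\mf{\Phi}$ the transition matrices of $\mf{A} - \mf{P}_i\bar{\mf{Z}}$ and $\mf{A} - \mf{P}\bar{\mf{Z}}$.

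It then remains to show both transition matrices decay, which is the classical exponential stability of the Kalman--Bucy error dynamics. Using the Lyapunov function $V = \mf{e}^\top\mf{P}^{-1}\mf{e}$ of Remark \ref{rem:p-ckb} and the Riccati identity, a direct computation gives $\dot V = -\mf{e}^\top(\mf{P}^{-1}\mf{W}\mf{P}^{-1} + \bar{\mf{Z}})\mf{e}$, which under Assumptions \ref{assum:observable}--\ref{assum:bounds} (uniform complete observability) yields uniform exponential stability of $\mf{A} - \mf{P}\bar{\mf{Z}}$ by the standard argument. The identical computation with $\mf{P}_i^{-1}$ applies to $\mf{A} - \mf{P}_i\bar{\mf{Z}}$, because by claim \ref{it:p-after-tmax} the matrix $\mf{P}_i$ satisfies the same Riccati equation and, by claim \ref{it:p-bounded}, is bounded above and below. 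Hence $\|\mf{\Phi}_i(t,T_{\max})\|$ and $\|\mf{\Phi}(t,T_{\max})\|$ both decay exponentially and $\|\mf{\Delta}(t)\|\to 0$. The subtlety worth flagging — and the reason claim \ref{it:p-bounded} must furnish a \emph{two-sided} bound on $\mf{P}_i$ — is that stability of the centralized factor alone does not suffice: the distributed transition matrix $\mf{\Phi}_i$ could otherwise grow faster than $\mf{\Phi}$ decays, so one genuinely needs exponential stability of the post-consensus distributed error dynamics as well.
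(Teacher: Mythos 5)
The first thing to note is that this paper never proves Theorem \ref{th:p-inf}: it is recalled as an auxiliary result and its proof (together with that of Lemma \ref{lem:Z}) is explicitly deferred to \cite{odeftc}, so there is no in-paper proof to compare your attempt against; what follows assesses your argument on its own terms. Your handling of items \ref{it:p-after-tmax} and \ref{it:p-lim-ckb} is essentially correct: item \ref{it:p-after-tmax} is indeed immediate from Lemma \ref{lem:Z}, and for item \ref{it:p-lim-ckb} the difference dynamics $\dot{\mf{\Delta}} = (\mf{A}-\mf{P}_i\bar{\mf{Z}})\mf{\Delta} + \mf{\Delta}(\mf{A}-\mf{P}\bar{\mf{Z}})^\top$ with $\mf{\Delta} = \mf{P}_i - \mf{P}$, the factorization $\mf{\Delta}(t) = \mf{\Phi}_i(t,T_{\max})\,\mf{\Delta}(T_{\max})\,\mf{\Phi}(t,T_{\max})^\top$, and exponential decay of both transition matrices via the Lyapunov functions $\mf{e}^\top\mf{P}^{-1}\mf{e}$ and $\mf{e}^\top\mf{P}_i^{-1}\mf{e}$ is the classical ``Riccati equation forgets its initial condition'' argument, and your algebra (including $\dot V = -\mf{e}^\top(\mf{P}^{-1}\mf{W}\mf{P}^{-1}+\bar{\mf{Z}})\mf{e}$) checks out. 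Your remark that decay of the centralized factor alone would not suffice, so a two-sided uniform bound on $\mf{P}_i$ is genuinely needed, is also apt.

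The genuine gap is in item \ref{it:p-bounded}, on the transient $[0,T_{\max}]$, and it is precisely at the step you flag as ``the main technical obstacle'' and then dispatch by assertion. Bounding $\hat{\mf{Z}}_i(t) \succeq -\mu\mf{I}_n$ on the compact interval and comparing gives the scalar inequality $\dot{p} \leq 2ap + w_2 + \mu p^2$, but the comparison solution of this inequality blows up in \emph{finite} time whenever $\mu > 0$: for instance with $a=0$ the escape time is $(\mu w_2)^{-1/2}\bigl(\pi/2 - \arctan\bigl(p(0)\sqrt{\mu/w_2}\bigr)\bigr)$, and nothing in the stated assumptions forces this to exceed $T_{\max} = \ell\pi/(\alpha\gamma\lambda_{\mathcal{G}})$; the two quantities involve independent parameters ($\mu$ is governed by the initial disagreement of the $\hat{\mf{Z}}_i$, while $T_{\max}$ is governed by $\alpha$, $\gamma$ and the graph). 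So the sentence ``whose solution survives past $T_{\max}$, ruling out escape on the transient'' asserts exactly what must be proven; to close it you would need a quantitative estimate relating $\mu$, $a$, $w_2$, $\|\mf{P}_i(0)\|$ and $T_{\max}$ (or a structurally different certificate exploiting the consensus dynamics, e.g.\ that the disagreement contracts fast enough relative to the Riccati growth). Note also that items \ref{it:p-bounded} and \ref{it:p-lim-ckb} both inherit this gap as you have organized the proof: your convergence argument needs $\mf{P}_i(T_{\max})$ to exist and to satisfy $\mf{P}_i(T_{\max}) \succ \mf{0}$ (for the lower bound and for $\mf{e}^\top\mf{P}_i^{-1}\mf{e}$ to be a valid Lyapunov function), and neither fact has been established through the transient.
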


\begin{remark}
In the centralized filter, the expression for $\mf{P}(t)$ in \eqref{eq:ckb-p} is obtained as the error covariance of the estimates \eqref{eq:ckb-x}. In contrast, in the ODEFTC algorithm, \eqref{eq:dkb-x} and \eqref{eq:dkb-p} have been designed independently from each other, meaning that we need to differentiate between the \emph{estimated} error covariance $\mf{P}_i(t)$ given by \eqref{eq:dkb-p} and the \emph{true} error covariance $\bm{\mathcal{P}}_i(t) = \cov \{\mf{x}(t) - \hat{\mf{x}}_i(t)\}$ of the estimates from \eqref{eq:dkb-x}. Therefore, in order to show that ODEFTC recovers the performance of the centralized filter, we need to show that $\mf{P}_i(t) \rightarrow \mf{P}(t)$ and $\bm{\mathcal{P}}_i(t) \rightarrow \mf{P}(t)$ separately. The first result is already given in Theorem \ref{th:p-inf}, and the second will be provided in Theorem \ref{th:main}.
\end{remark}

\section{Optimality of ODEFTC for LTV Systems}

The remainder of this work is devoted to analyzing the estimates \eqref{eq:dkb-x} from the ODEFTC algorithm, in order to show that ODEFTC asymptotically recovers the performance of the optimal centralized filter in each node as the consensus gain $\kappa$ is increased, even in the case of LTV systems. Formally, we aim to prove the following theorem, which generalizes \cite[Theorem 2]{odeftc} to the LTV case. 

\begin{theorem}\label{th:main}
    Let Assumptions \ref{assum:observable}, \ref{assum:bounds} and \ref{assum:bounds-Z} hold, along with the conditions from Lemma \ref{lem:Z}. Set $\kappa_0 = c^2/(2 r_1 \lambda_\mathcal{G})$, with $c$ and $r_1$ defined in Assumption \ref{assum:bounds} and $\lambda_{\mathcal{G}}$ being the algebraic connectivity of the graph. For $\kappa > \kappa_0$, the ODEFTC algorithm given by \eqref{eq:dkb} complies with the following properties for the estimates $\hat{\mf{x}}_i(t)$ and the true error covariance, $\bm{\mathcal{P}}_i(t) = \cov \{ \mf{x}(t) - \hat{\mf{x}}_i(t) \}$:
    \begin{enumerate}
        \item \label{it:unbiased} {The estimates $\hat{\mf{x}}_i(t)$ are unbiased.}
        \item \label{it:bounded} $\bm{\mathcal{P}}_i(t)$ is uniformly bounded $\forall t \geq 0$. 
        \item \label{it:opt-ltv} There exists a bound $b(\kappa)>0$ such that  $\|\lim_{t\to\infty}(\bm{\mathcal{P}}_i(t) - \mf{P}(t))\|\leq b(\kappa) $, where $\mf{P}(t)$ is the error covariance matrix from the centralized Kalman-Bucy filter \eqref{eq:ckb-p}, and $\lim_{\kappa\to\infty}b(\kappa) = 0$.
    \end{enumerate}
\end{theorem}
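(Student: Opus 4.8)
The natural starting point is the dynamics of the local estimation error $\mf{e}_i(t) = \mf{x}(t) - \hat{\mf{x}}_i(t)$. Subtracting \eqref{eq:dkb-x} from \eqref{eq:sys} and using $\mf{y}_i - \mf{C}_i\hat{\mf{x}}_i = \mf{C}_i\mf{e}_i + \mf{v}_i$ together with $\hat{\mf{x}}_j - \hat{\mf{x}}_i = \mf{e}_i - \mf{e}_j$, I obtain $\dot{\mf{e}}_i = (\mf{A} - \mf{P}_i\mf{Z}_i)\mf{e}_i + \mf{w} - \mf{K}_i\mf{v}_i - \kappa\mf{P}_i\sum_{j\in\mathcal{N}_i}(\mf{e}_i - \mf{e}_j)$, where I have used $\mf{K}_i\mf{C}_i = \mf{P}_i\mf{Z}_i$ from \eqref{eq:dkb-k} with $\mf{Z}_i = N\mf{C}_i^\top\mf{R}_i^{-1}\mf{C}_i$. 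Stacking $\mf{e} = \col_{i=1}^N(\mf{e}_i)$ yields a linear time-varying stochastic system $\dot{\mf{e}} = \mf{F}(t)\mf{e} + \mathds{1}_N\otimes\mf{w} - \bm{\mathcal{K}}\mf{v}$, with $\mf{F}(t) = \diag_{i=1}^N(\mf{A} - \mf{P}_i\mf{Z}_i) - \kappa\,\diag_{i=1}^N(\mf{P}_i)\,(\mf{Q}_\mathcal{G}\otimes\mf{I}_n)$ and $\bm{\mathcal{K}} = \diag_{i=1}^N(\mf{K}_i)$. Property \ref{it:unbiased} follows by taking expectations: since $\mf{w}$ and $\mf{v}$ are zero-mean, $\mathds{E}\{\mf{e}(t)\}$ obeys the homogeneous equation $\tfrac{d}{dt}\mathds{E}\{\mf{e}\} = \mf{F}(t)\mathds{E}\{\mf{e}\}$, which is identically zero under the ideal initialization $\hat{\mf{x}}_i(0)=\mf{x}_0$ and, more generally, decays to zero once $\mf{F}(t)$ is shown to be uniformly exponentially stable.

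The core of the argument is therefore establishing uniform exponential stability of $\mf{F}(t)$ for $\kappa > \kappa_0$. Guided by Remark \ref{rem:p-ckb}, I would use $V(\mf{e},t) = \sum_{i=1}^N \mf{e}_i^\top\mf{P}_i(t)^{-1}\mf{e}_i$, which is valid because $\mf{P}_i(t)$ is uniformly bounded above and below (Theorem \ref{th:p-inf}, item \ref{it:p-bounded}, with Remark \ref{rem:p-ckb}). Differentiating along the homogeneous dynamics and substituting $\tfrac{d}{dt}\mf{P}_i^{-1} = -\mf{P}_i^{-1}\dot{\mf{P}}_i\mf{P}_i^{-1}$ from \eqref{eq:dkb-p}, the $\mf{A}$-terms cancel and, after the fixed time $T_{\max}$ where $\hat{\mf{Z}}_i = \bar{\mf{Z}}$ by Lemma \ref{lem:Z}, one is left with $\dot V = -2\sum_i\mf{e}_i^\top\mf{Z}_i\mf{e}_i + \sum_i\mf{e}_i^\top\bar{\mf{Z}}\mf{e}_i - 2\kappa\,\mf{e}^\top(\mf{Q}_\mathcal{G}\otimes\mf{I}_n)\mf{e} - \sum_i\mf{e}_i^\top\mf{P}_i^{-1}\mf{W}\mf{P}_i^{-1}\mf{e}_i$. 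I then split $\mf{e}_i = \bar{\mf{e}} + \tilde{\mf{e}}_i$ into the network average $\bar{\mf{e}} = \tfrac{1}{N}\sum_i\mf{e}_i$ and the disagreement $\tilde{\mf{e}}_i$. On the consensus subspace the first two sums collapse to $-N\bar{\mf{e}}^\top\bar{\mf{Z}}\bar{\mf{e}}$, the exact (negative semidefinite) decay term of the centralized filter, which is rendered definite over a window by uniform complete observability (Assumption \ref{assum:observable}). The residual disagreement contributions are bounded using $\|\bar{\mf{Z}}\| = \|\mf{C}^\top\mf{R}^{-1}\mf{C}\| \leq c^2/r_1$, whereas the consensus term gives $-2\kappa\,\mf{e}^\top(\mf{Q}_\mathcal{G}\otimes\mf{I}_n)\mf{e} \leq -2\kappa\lambda_\mathcal{G}\|\tilde{\mf{e}}\|^2$ since $\mathds{1}_N\otimes\bar{\mf{e}}$ lies in the kernel of $\mf{Q}_\mathcal{G}\otimes\mf{I}_n$. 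Matching these bounds yields precisely the threshold $\kappa_0 = c^2/(2 r_1\lambda_\mathcal{G})$, above which the disagreement subspace is contracting and $V$ is a strict Lyapunov function, establishing uniform exponential stability of $\mf{F}(t)$. Property \ref{it:bounded} is then a standard consequence: the true covariance $\bm{\Sigma}(t)=\cov\{\mf{e}(t)\}$ satisfies $\dot{\bm{\Sigma}} = \mf{F}\bm{\Sigma} + \bm{\Sigma}\mf{F}^\top + \bm{\Theta}$ with $\bm{\Theta}$ the covariance of $\mathds{1}_N\otimes\mf{w} - \bm{\mathcal{K}}\mf{v}$, uniformly bounded by Assumption \ref{assum:bounds} and boundedness of $\mf{P}_i$; an exponentially stable LTV system driven by a bounded input covariance has bounded state covariance, so every diagonal block $\bm{\mathcal{P}}_i(t)$ is uniformly bounded for $t\geq T_{\max}$, and on $[0,T_{\max}]$ boundedness holds by continuity.

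For Property \ref{it:opt-ltv} I would compare the averaged error with the centralized one. Averaging the per-node dynamics, the consensus term cancels (because $\mathds{1}_N^\top\mf{Q}_\mathcal{G} = \mf{0}$ for the undirected graph) and the common process noise survives as $\mf{w}$, giving $\dot{\bar{\mf{e}}} = \mf{A}\bar{\mf{e}} - \sum_i\mf{P}_i\mf{C}_i^\top\mf{R}_i^{-1}(\mf{C}_i\mf{e}_i + \mf{v}_i) + \mf{w}$. The key point is that the gain normalization by $N$ in \eqref{eq:dkb-k} makes this average reproduce the centralized update: as $t\to\infty$, $\mf{P}_i\to\mf{P}$ (Theorem \ref{th:p-inf}, item \ref{it:p-lim-ckb}), so $\sum_i\mf{P}_i\mf{C}_i^\top\mf{R}_i^{-1}\mf{C}_i\bar{\mf{e}}\to\mf{P}\bar{\mf{Z}}\bar{\mf{e}}$ and $\sum_i\mf{P}_i\mf{C}_i^\top\mf{R}_i^{-1}\mf{v}_i\to\mf{P}\mf{C}^\top\mf{R}^{-1}\mf{v} = \mf{K}\mf{v}$, matching the centralized error dynamics of \eqref{eq:ckb-x}--\eqref{eq:ckb-p} exactly, except for the coupling term $\sum_i\mf{P}_i\mf{C}_i^\top\mf{R}_i^{-1}\mf{C}_i\tilde{\mf{e}}_i$. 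Since the process noise lies in the consensus subspace, the disagreement $\tilde{\mf{e}}$ is driven only by deviations of the measurement-noise injections and by a bounded coupling to $\bar{\mf{e}}$, while being damped at rate proportional to $\kappa\lambda_\mathcal{G}$; hence $\cov\{\tilde{\mf{e}}(t)\}$ is of order $1/\kappa$. Propagating this through the averaged dynamics, $\lim_{t\to\infty}\cov\{\bar{\mf{e}}(t)\}$ differs from $\mf{P}(t)$ by a term vanishing as $\kappa\to\infty$, and because $\bm{\mathcal{P}}_i = \cov\{\bar{\mf{e}} + \tilde{\mf{e}}_i\}$ differs from $\cov\{\bar{\mf{e}}\}$ only by disagreement terms of the same order, I obtain $\|\lim_{t\to\infty}(\bm{\mathcal{P}}_i(t) - \mf{P}(t))\| \leq b(\kappa)$ with $b(\kappa) = O(1/\kappa) \to 0$.

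I expect the main obstacle to be the stability step. The local innovation gains act through $\mf{P}_i\mf{Z}_i$ while the Riccati correction uses the consensus value $\mf{P}_i\bar{\mf{Z}}$, so the cancellation that makes $V$ monotone for the centralized Kalman--Bucy filter no longer occurs; the resulting indefinite cross terms between $\bar{\mf{e}}$ and $\tilde{\mf{e}}$ must be absorbed by a single consensus gain $\kappa$ at the same time that the observability-based window argument supplies definiteness in the $\bar{\mf{e}}$ direction. Tracking the constants carefully in this step is precisely what pins down the explicit threshold $\kappa_0$, and quantifying the $\kappa$-dependence of the disagreement covariance and propagating it through the coupled average--disagreement dynamics is what delivers the vanishing bound $b(\kappa)$ in Property \ref{it:opt-ltv}.
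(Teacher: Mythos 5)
Your overall architecture matches the paper's: stack the node errors, split into a consensus component $\bar{\mf{e}}$ and a disagreement component $\tilde{\mf{e}}$, use a $\mf{P}^{-1}$-weighted Lyapunov function so that the consensus coupling becomes exactly the Laplacian quadratic form, match $c^2/r_1$ against $2\kappa\lambda_{\mathcal{G}}$ to obtain $\kappa_0$, and argue that the disagreement covariance shrinks as $\kappa$ grows; items \ref{it:unbiased} and \ref{it:bounded} are handled essentially as in the paper. However, there are two genuine gaps. First, your stability step works directly with the node-dependent weights $\mf{P}_i(t)^{-1}$, invoking Lemma \ref{lem:Z} only to fix $\hat{\mf{Z}}_i=\bar{\mf{Z}}$ after $T_{\max}$; but after $T_{\max}$ the matrices $\mf{P}_i(t)$ still differ from $\mf{P}(t)$ and from one another---they coincide only asymptotically (Theorem \ref{th:p-inf}). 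With node-dependent weights your $V$ does not split orthogonally (in general $\sum_i \bar{\mf{e}}^\top\mf{P}_i^{-1}\tilde{\mf{e}}_i \neq 0$), and even with a common weight, expanding $-2\sum_i\mf{e}_i^\top\mf{Z}_i\mf{e}_i+\sum_i\mf{e}_i^\top\bar{\mf{Z}}\mf{e}_i$ with $\mf{e}_i=\bar{\mf{e}}+\tilde{\mf{e}}_i$ leaves the indefinite cross term $-4\sum_i\bar{\mf{e}}^\top\mf{Z}_i\tilde{\mf{e}}_i$, which does \emph{not} vanish because $\mf{Z}_i$ varies with $i$ (only the $\bar{\mf{Z}}$ cross terms cancel). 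Absorbing it by Young's inequality either consumes the entire $-N\bar{\mf{e}}^\top\bar{\mf{Z}}\bar{\mf{e}}$ budget (destroying the observability-based decay you need in the consensus direction) or adds disagreement terms scaling with $\max_i\|\mf{Z}_i\|$, inflating the threshold well beyond $c^2/(2r_1\lambda_{\mathcal{G}})$; either way the computation does not ``yield precisely'' $\kappa_0$ as you claim---you flag this obstacle yourself but leave it unresolved. The paper's resolution is structural and is absent from your plan: Lemmas \ref{lem:stable}, \ref{lem:ineq-kappa} and \ref{lem:proof-with-p} are proved under the idealization $\mf{P}_i(t)=\mf{P}(t)$ for all $t$ (common weight; the consensus dynamics then reduce to the centralized filter, whose stability is quoted from Kalman rather than re-derived via an observability window), and Lemma \ref{lem:comparison} then transfers every conclusion to the true $\mf{P}_i(t)$ by treating $\mf{P}_i-\mf{P}$ as a vanishing perturbation in the sense of Khalil. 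Some such two-stage device is what your proposal needs and lacks.

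Second, for item \ref{it:opt-ltv} the entire quantitative content is asserted rather than proven. The step ``damped at rate proportional to $\kappa\lambda_\mathcal{G}$, hence $\cov\{\tilde{\mf{e}}(t)\}$ is of order $1/\kappa$'' is exactly where the paper does its real work: it writes the ODE \eqref{eq:dyn-x} for the covariance mismatch $\mf{X}=\bm{\mathcal{P}}-\mf{U}_N\otimes\mf{P}$, proves that the forcing $\bm{\Sigma}$ annihilates the consensus direction (Lemma \ref{lem:sigma-ort})---an indispensable fact, since the $\kappa$-damping acts only on disagreement directions and could never suppress a forcing component along consensus---establishes a strict Lyapunov inequality with explicit margin $\tilde{\kappa}$ (Lemma \ref{lem:ineq-kappa}), and runs a vectorized ultimate-boundedness argument (with the Appendix manipulations) to get $b'(\tilde{\kappa})=\bar{\sigma}p_2/(2\tilde{\kappa}\lambda_{\mathcal{G}}p_1^2)$. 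Your averaging route could plausibly be completed, but ``propagating this through the averaged dynamics'' hides precisely this work, and it contains a quantitative error: the cross-covariances $\cov\{\bar{\mf{e}},\tilde{\mf{e}}_i\}$ are not ``of the same order'' as $\cov\{\tilde{\mf{e}}_i\}$; by Cauchy--Schwarz they are only $O(1/\sqrt{\kappa})$ when $\cov\{\tilde{\mf{e}}_i\}=O(1/\kappa)$ and $\cov\{\bar{\mf{e}}\}=O(1)$, so your route yields at best $b(\kappa)=O(1/\sqrt{\kappa})$. The theorem's conclusion $\lim_{\kappa\to\infty}b(\kappa)=0$ survives, but not the rate you state.
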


\begin{remark} Notice that Theorem \ref{th:main} also provides a sufficient lower bound $\kappa_0$ for the consensus gain to ensure stability, depending only on the bounds of the matrices $\mf{C}(t), \, \mf{R}(t)$ in the measurement model and the topology of the graph $\mathcal{G}$, which can also be computed in a distributed fashion, e.g. \cite{Montijano2017}. 
\end{remark}

\begin{remark}
    While the result of item \ref{it:opt-ltv} in Theorem \ref{th:main}, i.e., achieving a covariance arbitrarily close to the optimal one even for large networks, might appear surprising, it is theoretically possible since, with continuous-time communication, an arbitrary number of messages can be transmitted within any time interval. As usual, in a discretized implementation, a slight performance degradation is expected, as up to $N$ messages may be required in the worst case for information to propagate throughout the entire sensor network. Additionally, as $\kappa$ increases, smaller discretization steps are needed to maintain the stability of the discretized estimator.\end{remark}

In the subsequent sections, we include some necessary analyses before proving Theorem \ref{th:main} in Section \ref{sec:proof-th}.

\subsection{Error and Covariance Dynamics}
We begin our analysis by writing the estimation error dynamics in the sensor network. Let $\mf{e}_i(t) = \mf{x}(t) - \hat{\mf{x}}_i(t)$ be the error for the estimate computed by node $i$. Then, considering \eqref{eq:sys}, \eqref{eq:meas} and \eqref{eq:dkb-x}, its error dynamics are given by
\begin{equation}
\begin{aligned}
&\dot{\mf{e}}_i(t) = \dot{\mf{x}}(t) - \dot{\hat{\mf{x}}}_i(t) \\
&= (\mf{A}(t)-\mf{K}_i(t)\mf{C}_i(t)) \mf{e}_i(t)  + \mf{w}(t) - \mf{K}_i(t) \mf{v}_i(t) - \kappa \mf{P}_i(t) \textstyle \sum_{j \in \mathcal{N}_i} \left(\mf{e}_i(t) - \mf{e}_j(t) \right).
\end{aligned}
\end{equation}
Define the following matrices
\begin{equation}\label{eq:a-ast}
\begin{aligned}
    \mf{A}_i(t) &= \mf{A}(t) - \mf{K}_i(t) \mf{C}_i(t), \\
    \mf{A}^\ast(\kappa, t) &= \diag_{i=1}^N (\mf{A}_i(t)) - \kappa \, \diag_{i=1}^N(\mf{P}_i(t))(\mf{Q}_\mathcal{G} \otimes \mf{I}_n),
\end{aligned}
\end{equation}
recalling that $\mf{Q}_\mathcal{G}$ is the Laplacian matrix for the communication graph. 
We stack the errors as $\mf{e}(t) = \col_{i=1}^N(\mf{e}_i(t))$ and define the aggregate error covariance $\bm{\mathcal{P}}(t) = \cov\{\mf{e}(t)\}$, which has the individual error covariances $\bm{\mathcal{P}}_i(t)$ of the nodes as block diagonal elements, with the other elements being correlation terms due to all nodes observing the same noisy process \eqref{eq:sys}. Note that proving item \ref{it:opt-ltv} is equivalent to proving that $\bm{\mathcal{P}}(t) \rightarrow \mf{U}_N \otimes \mf{P}(t)$, i.e. that the aggregate behavior of the nodes is the same as $N$ centralized Kalman-Bucy filters using the same information.
Finally, defining $\mf{n}(t) = \mathds{1}_N \otimes \mf{w}(t) - \diag_{i=1}^N(\mf{K}_i(t))\mf{v}(t)$, we can write the dynamics for the aggregate error and its error covariance:
\begin{equation}\label{eq:error-dyn}
\begin{aligned}
\dot{\mf{e}}(t) = \mf{A}^\ast(\kappa, t) \mf{e}(t) + \mf{n}(t),
\end{aligned}
\end{equation}
\begin{equation}\label{eq:cov-dyn}
\begin{aligned}
\dot{\bm{\mathcal{P}}}(t) =& \mf{A}^\ast(\kappa, t) \bm{\mathcal{P}}(t) + \bm{\mathcal{P}}(t) \mf{A}^\ast(\kappa,t)^\top + \mf{U}_N \otimes \mf{W}(t)\\&\quad + \diag_{i=1}^N(\mf{K}_i(t)) \mf{R}(t) \diag_{i=1}^N(\mf{K}_i(t))^\top.
\end{aligned}
\end{equation}

\begin{remark}\label{rem:pi-p}
The dynamic matrix $\mf{A}^\ast(\kappa, t)$ defined in \eqref{eq:a-ast} depends on the local matrices $\mf{P}_i(t)$ computed as \eqref{eq:dkb-p} in the ODEFTC algorithm. By item \ref{it:p-lim-ckb} of Theorem \ref{th:p-inf}, we have that $\mf{P}_i(t) \rightarrow \mf{P}(t)$ asymptotically, where $\mf{P}(t)$ is the error covariance matrix from the centralized filter. To simplify the analysis, we first prove some relevant lemmas under the assumption of consensus on $\mf{P}_i(t) = \mf{P}(t)$. Then, we formally show that the obtained results apply despite the transitory in convergence of $\mf{P}_i(t)$, and we use them to prove Theorem \ref{th:main}.
\end{remark}

\subsection{Choice of Consensus Gain for Stability}

From \eqref{eq:error-dyn} and \eqref{eq:cov-dyn}, it is evident that the stability of the error and the error covariance dynamics depends on the choice of the consensus gain $\kappa$. The following lemma states the existence of an appropriate choice of $\kappa$ to guarantee stability.

\begin{lemma}\label{lem:stable} 
    Let Assumptions \ref{assum:observable} and \ref{assum:bounds} hold. Moreover, assume that $\mf{P}_i(t) = \mf{P}(t), \, \forall i \in \mathcal{V}, \, \forall t \geq 0,$ with $\mf{P}(t)$ from \eqref{eq:ckb-p}. Then, there exists $\kappa_0>0$ such that $\forall \kappa > \kappa_0$ the origin of the system $\dot{\mf{e}}(t) = \mf{A}^\ast(\kappa,t) \mf{e}(t)$, with $\mf{A}^\ast(\kappa, t)$ defined in \eqref{eq:a-ast}, is uniformly asymptotically stable. 
\end{lemma}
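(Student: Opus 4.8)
The plan is to build a time-varying quadratic Lyapunov function for the aggregate error and to exploit the two-scale structure of $\mf{A}^\ast(\kappa,t)$: a block-diagonal part that, once averaged across the network, reproduces the centralized filter, and a consensus part that strongly damps inter-node disagreement as $\kappa$ grows. First I would use the hypothesis $\mf{P}_i(t)=\mf{P}(t)$ to simplify \eqref{eq:a-ast} to $\mf{A}^\ast(\kappa,t)=\diag_{i=1}^N(\mf{A}_i(t))-\kappa(\mf{Q}_\mathcal{G}\otimes\mf{P}(t))$, using $(\mf{I}_N\otimes\mf{P})(\mf{Q}_\mathcal{G}\otimes\mf{I}_n)=\mf{Q}_\mathcal{G}\otimes\mf{P}$. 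Because Remark \ref{rem:p-ckb} guarantees $p_1\mf{I}_n\preceq\mf{P}(t)\preceq p_2\mf{I}_n$, the candidate $V(\mf{e},t)=\mf{e}^\top(\mf{I}_N\otimes\mf{P}(t)^{-1})\mf{e}$ is a uniformly bounded, positive definite Lyapunov function.

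Next I would differentiate $V$ along $\dot{\mf{e}}=\mf{A}^\ast(\kappa,t)\mf{e}$. The consensus part contributes $-2\kappa\,\mf{e}^\top(\mf{Q}_\mathcal{G}\otimes\mf{I}_n)\mf{e}$, since $(\mf{I}_N\otimes\mf{P}^{-1})(\mf{Q}_\mathcal{G}\otimes\mf{P})=\mf{Q}_\mathcal{G}\otimes\mf{I}_n$ and $\mf{Q}_\mathcal{G}=\mf{Q}_\mathcal{G}^\top$; this term is non-positive and grows with $\kappa$. For the block-diagonal part I would use $\tfrac{d}{dt}\mf{P}^{-1}=-\mf{P}^{-1}\dot{\mf{P}}\mf{P}^{-1}$ together with the Riccati equation \eqref{eq:ckb-p} and the gain \eqref{eq:dkb-k}. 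The key cancellation is that all terms involving $\mf{A}(t)$ drop out, leaving a node-wise contribution $\sum_{i=1}^N\mf{e}_i^\top\mf{M}_i\mf{e}_i$ with $\mf{M}_i=-2N\mf{C}_i^\top\mf{R}_i^{-1}\mf{C}_i-\mf{P}^{-1}\mf{W}\mf{P}^{-1}+\mf{C}^\top\mf{R}^{-1}\mf{C}$, where I used $\mf{C}^\top\mf{R}^{-1}\mf{C}=\sum_j\mf{C}_j^\top\mf{R}_j^{-1}\mf{C}_j$.

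Then I would split $\mf{e}=\mathds{1}_N\otimes\bar{\mf{e}}+\mf{e}_\perp$, with $\bar{\mf{e}}=\tfrac1N\sum_i\mf{e}_i$ the network average and $\mf{e}_\perp$ orthogonal to the consensus subspace, so that $\mf{e}^\top(\mf{Q}_\mathcal{G}\otimes\mf{I}_n)\mf{e}=\mf{e}_\perp^\top(\mf{Q}_\mathcal{G}\otimes\mf{I}_n)\mf{e}_\perp\ge\lambda_\mathcal{G}\|\mf{e}_\perp\|^2$. On the consensus subspace the node contributions aggregate as $\sum_i\mf{M}_i=-N(\mf{C}^\top\mf{R}^{-1}\mf{C}+\mf{P}^{-1}\mf{W}\mf{P}^{-1})$, which is exactly, up to the factor $N$, the decay of $\mf{e}^\top\mf{P}^{-1}\mf{e}$ for the centralized Kalman--Bucy filter \eqref{eq:ckb}; that filter is uniformly asymptotically stable under Assumptions \ref{assum:observable}--\ref{assum:bounds} by the classical Kalman--Bucy theory \cite{Kalman1961,Anderson1971}. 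On the transverse subspace, discarding the negative-semidefinite terms of $\mf{M}_i$ and using $\mf{C}^\top\mf{R}^{-1}\mf{C}\preceq(c^2/r_1)\mf{I}_n$ from Assumption \ref{assum:bounds} leaves the upper bound $(c^2/r_1-2\kappa\lambda_\mathcal{G})\|\mf{e}_\perp\|^2$, which is strictly negative precisely when $\kappa>\kappa_0=c^2/(2r_1\lambda_\mathcal{G})$; this is where the stated threshold originates, and the transverse decay rate grows without bound as $\kappa$ increases.

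Finally I would combine the two subspaces. Since the transverse modes are exponentially damped with rate proportional to $\kappa$ while the averaged mode obeys the uniformly asymptotically stable centralized filter, the interconnection is a fast/slow (singular-perturbation) cascade whose cross-coupling terms are absorbed for $\kappa>\kappa_0$, yielding a uniform negative-definite bound on a composite Lyapunov function and hence uniform asymptotic stability. The main obstacle is precisely this last step: under Assumption \ref{assum:observable} observability is only \emph{uniform-complete} (an integral condition), so $\mf{C}^\top\mf{R}^{-1}\mf{C}+\mf{P}^{-1}\mf{W}\mf{P}^{-1}$ need not be pointwise positive definite and $\dot V$ is not pointwise negative definite along the consensus direction. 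A naive single-Lyapunov argument therefore only yields boundedness; establishing asymptotic stability requires invoking the integral-observability-based uniform asymptotic stability of the centralized filter and treating the averaged mode separately from the strongly damped transverse dynamics.
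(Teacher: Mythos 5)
Your proposal is correct and follows essentially the same route as the paper's own proof: the same Lyapunov function $V(\mf{e})=\mf{e}^\top(\mf{I}_N\otimes\mf{P}(t)^{-1})\mf{e}$, the same splitting into a network-average component and an orthogonal disagreement component via $\mf{H}=\mf{I}_N-\mf{U}_N/N$, the same cancellation of the $\mf{A}(t)$ terms through the Riccati identity for $\frac{\text{d}}{\text{d}t}\mf{P}(t)^{-1}$, the same bound $(c^2/r_1-2\kappa\lambda_\mathcal{G})\|\tilde{\mf{e}}\|^2$ yielding the identical threshold $\kappa_0=c^2/(2r_1\lambda_\mathcal{G})$, and the same appeal to uniform asymptotic stability of the centralized Kalman--Bucy filter for the averaged mode. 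If anything, your closing caveat is more careful than the paper: the paper writes the averaged and disagreement dynamics as exactly decoupled ($\dot{\bar{\mf{e}}}=(\mf{I}_N\otimes\mf{A}_c(t))\bar{\mf{e}}$ and $\dot{\tilde{\mf{e}}}=\mf{A}^\ast(\kappa,t)\tilde{\mf{e}}$), silently discarding the cross-coupling terms (e.g., $-\mf{P}(t)\sum_i \mf{C}_i^\top\mf{R}_i^{-1}\mf{C}_i\,\tilde{\mf{e}}_i(t)$ entering the average), whereas you correctly flag that these terms exist and must be absorbed by a cascade/ISS-type argument exploiting the $\kappa$-dependent damping of the transverse modes.
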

\begin{proof}

We separate $\mf{e}(t) = \bar{\mf{e}}(t) + \tilde{\mf{e}}(t)$, with $\bar{\mf{e}}(t) = \mathds{1}_N \otimes \bm{\upvarepsilon}(t)$ where $\bm{\upvarepsilon}(t) = (\mathds{1}^\top_N/N \otimes \mf{I}_n)\mf{e}(t)$, and $\tilde{\mf{e}}(t) = (\mf{H} \otimes \mf{I}_n) \mf{e}(t)$ with $\mf{H}=\mf{I}_N - (1/N)\mf{U}_N$. This is, we have divided $\mf{e}(t)$ into a component $\bar{\mf{e}}(t)$, which is parallel to the consensus direction, and a disagreement component $\tilde{\mf{e}}(t)$, which is orthogonal to the consensus direction.
In order to show that $\dot{\mf{e}}(t) = \mf{A}^\ast(\kappa,t) \mf{e}(t)$ is asymptotically stable towards the origin for a large enough $\kappa$, we analyze each of the components $\bar{\mf{e}}(t)$ and $\tilde{\mf{e}}(t)$.

First, we consider $\bar{\mf{e}}(t)$, whose dynamics are given by
\begin{equation}\label{eq:dyn-ebar}
\begin{aligned}
    &\dot{\bar{\mf{e}}}(t) = \mathds{1}_N \otimes (\mathds{1}^\top_N/N \otimes \mf{I}_n) \dot{\mf{e}}(t)\\
    &= \mathds{1}_N \otimes (\mathds{1}^\top_N/N \otimes \mf{I}_n) (\diag_{i=1}^N(\mf{A}_i(t)) - \kappa(\mf{Q}_\mathcal{G} \otimes \mf{P}(t)))\mf{e}(t) \\
    &= \mathds{1}_N \otimes (\mathds{1}^\top_N/N \otimes \mf{I}_n) (\mf{I}_N \otimes \mf{A}(t)) (\mathds{1}_N \otimes \bm{\upvarepsilon}(t))\\
    &\quad - \mathds{1}_N \otimes (\mathds{1}^\top_N/N \otimes \mf{I}_n) \diag_{i=1}^N(\mf{K}_i(t)\mf{C}_i(t)) (\mathds{1}_N \otimes \bm{\upvarepsilon}(t))
    ,
\end{aligned}
\end{equation}
where we have used the fact that $\mathds{1}^\top_N \mf{Q}_\mathcal{G} = \mf{0}$ and $\mathds{1}^\top_N \mf{e}(t) = \mathds{1}^\top_N(\bar{\mf{e}}(t) + \tilde{\mf{e}}(t)) = \mathds{1}^\top_N\bar{\mf{e}}(t)$, since $\mathds{1}^\top\mf{H}=\mf{0}$.

Note that
\begin{equation}
    (\mathds{1}^\top_N/N \otimes \mf{I}_n) (\mf{I}_N \otimes \mf{A}(t)) (\mathds{1}_N \otimes \bm{\upvarepsilon}(t)) = \mf{A}(t) \bm{\upvarepsilon}(t).
\end{equation}
In addition, it can be verified that
\begin{equation}
\begin{aligned}
    &(\mathds{1}^\top_N/N \otimes \mf{I}_n) \diag_{i=1}^N(\mf{K}_i(t)\mf{C}_i(t)) (\mathds{1}_N \otimes \bm{\upvarepsilon}(t)) \\
    &= (1/N) \textstyle \sum_{i=1}^N \left( N \mf{P}(t) \mf{C}_i(t)^\top \mf{R}_i(t)^{-1} \mf{C}_i(t) \bm{\upvarepsilon}(t) \right) \\
    &= \mf{P}(t) \textstyle \sum_{i=1}^N \left( \mf{C}_i(t)^\top \mf{R}_i(t)^{-1} \mf{C}_i(t) \right) \bm{\upvarepsilon}(t) \\
    &= \mf{P}(t) \mf{C}(t)^\top \mf{R}(t)^{-1} \mf{C}(t) \bm{\upvarepsilon}(t) = \mf{K}(t) \mf{C}(t) \bm{\upvarepsilon}(t).
\end{aligned}
\end{equation}

With these considerations, we can write 
\begin{equation}\label{eq:dyn-ebar}
    \dot{\bar{\mf{e}}}(t) = \mathds{1}_N \otimes (\mf{A}(t) - \mf{K}(t)\mf{C}(t))\bm{\upvarepsilon}(t) = (\mf{I}_N \otimes \mf{A}_c(t)) \bar{\mf{e}}(t),
\end{equation}
where $\mf{A}_c(t) = \mf{A}(t) - \mf{K}(t)\mf{C}(t)$ is the dynamic matrix for the estimation error in the centralized Kalman-Bucy filter. Since the centralized filter is known to be asymptotically stable \cite[Theorem 4]{Kalman1961}, it follows that $\bar{\mf{e}}(t)$ converges to the origin asymptotically, regardless of the choice of $\kappa$.

Now, we address the disagreement term $\tilde{\mf{e}}(t)$, which has dynamics $$
    \dot{\tilde{\mf{e}}}(t) = \mf{A}^\ast(\kappa,t) \tilde{\mf{e}}(t),
$$ showing that it is stable for an appropriate choice of $\kappa$.
We use the following Lyapunov function
\begin{equation}
    V(\tilde{\mf{e}}(t)) = \tilde{\mf{e}}(t)^\top (\mf{I}_N \otimes \mf{P}(t)^{-1}) \tilde{\mf{e}}(t),  
\end{equation}
which yields
\begin{equation}\label{eq:lyap-disagr}
\begin{aligned}
    \dot{V}(\tilde{\mf{e}}(t)) &= \tilde{\mf{e}}(t)^\top \mf{A}^\ast(\kappa,t)^\top (\mf{I}_N \otimes \mf{P}(t)^{-1}) \tilde{\mf{e}}(t)  + \tilde{\mf{e}}(t)^\top (\mf{I}_N \otimes \mf{P}(t)^{-1}) \mf{A}^\ast(\kappa,t) \tilde{\mf{e}}(t) \\&\quad + \tilde{\mf{e}}(t)^\top (\mf{I}_N \otimes \textstyle\frac{\text{d}}{\text{d}t}(\mf{P}(t)^{-1})) \tilde{\mf{e}}(t) \\
    &=\tilde{\mf{e}}(t)^\top \diag_{i=1}^N(\mf{P}(t)^{-1} \mf{A}_{i}(t) + \mf{A}_{i}(t)^\top \mf{P}(t)^{-1} \\&\quad+ \textstyle \frac{\text{d}}{\text{d}t}({\mf{P}}(t)^{-1})) \tilde{\mf{e}}(t) - 2\kappa \tilde{\mf{e}}(t)^\top (\mf{Q}_\mathcal{G} \otimes \mf{I}_N) \tilde{\mf{e}}(t). \\
\end{aligned}
\end{equation}
Using the identity $\frac{\text{d}}{\text{d}t}({\mf{P}}(t)^{-1}) = - \mf{P}(t)^{-1} \dot{\mf{P}}(t) \mf{P}(t)^{-1}$ along with \eqref{eq:ckb-p}, we can write
\begin{equation}\label{eq:deriv-p}
\begin{aligned}
\textstyle \frac{\text{d}}{\text{d}t}({\mf{P}}(t)^{-1}) &= - \mf{P}(t)^{-1}\mf{A}(t) - \mf{A}(t)^\top\mf{P}(t)^{-1} \\&\quad - \mf{P}(t)^{-1} \mf{W}(t) \mf{P}(t)^{-1} + \mf{C}(t)^\top \mf{R}(t)^{-1} \mf{C}(t).
\end{aligned}
\end{equation}
Noticing that
\begin{equation}
\begin{aligned}
    &\mf{P}(t)^{-1} \mf{A}_i(t) = 
    \mf{P}(t)^{-1} \mf{A}(t) - N \mf{C}_i(t)^\top\mf{R}_i(t)^{-1} \mf{C}_i(t),
\end{aligned}
\end{equation}
it follows that
\begin{equation}
\begin{aligned}
&\mf{P}(t)^{-1} \mf{A}_i(t) + \mf{A}_i(t)^\top \mf{P}(t)^{-1} + \textstyle \frac{\text{d}}{\text{d}t}({\mf{P}}(t)^{-1}) \\
&= \mf{C}(t)^\top \mf{R}(t)^{-1} \mf{C}(t) - \mf{P}(t)^{-1} \mf{W}(t)  \mf{P}(t)^{-1} - 2N\mf{C}_i(t)^\top\mf{R}_i(t)^{-1} \mf{C}_i(t) \\
&\prec \mf{C}(t)^\top \mf{R}(t)^{-1} \mf{C}(t),
\end{aligned}
\end{equation}
due to the positive semidefiniteness of $\mf{P}(t)^{-1} \mf{W}(t)  \mf{P}(t)^{-1}$ and $\mf{C}_i(t)^\top\mf{R}_i(t)^{-1} \mf{C}_i(t)$.
Substituting this in \eqref{eq:lyap-disagr}, we reach
\begin{equation}\label{eq:lyap-disagr3}
\begin{aligned}
    \dot{V}(\tilde{\mf{e}}(t)) &\leq \tilde{\mf{e}}(t)^\top (\mf{I}_N \otimes \mf{C}(t)^\top \mf{R}(t)^{-1} \mf{C}(t))\tilde{\mf{e}}(t)  -2 \kappa \tilde{\mf{e}}(t)^\top(\mf{Q}_\mathcal{G} \otimes \mf{I}_N) \tilde{\mf{e}}(t) \\
    &\leq \|\mf{C}(t)^\top \mf{R}(t)^{-1} \mf{C}(t)\| \|\tilde{\mf{e}}(t)\|^2 - 2 \kappa \lambda_\mathcal{G} \|\tilde{\mf{e}}(t)\|^2, \\
    &\leq (c^2/r_1) \|\tilde{\mf{e}}(t)\|^2 - 2 \kappa \lambda_\mathcal{G} \|\tilde{\mf{e}}(t)\|^2,
\end{aligned}
\end{equation}
where we have used the fact that
$$
    \|\mf{C}(t)^\top \mf{R}(t)^{-1} \mf{C}(t)\| \leq \|\mf{C}(t)\|^2 \|\mf{R}(t)^{-1}\| \leq c^2/r_1,
$$
with the constants $c$ and $r_1$ defined as the bounds for the matrices in Assumption \ref{assum:bounds}. 
From this, we conclude that if 
\begin{equation}
     \kappa > \frac{c^2}{2 r_1 \lambda_\mathcal{G}} =: \kappa_0
\end{equation}
then, all trajectories of $\tilde{\mf{e}}(t)$, and therefore of the original vector $\mf{e}(t)$ as well, converge to the origin asymptotically, completing the proof.
\end{proof}

In light of the proof of Lemma \ref{lem:stable}, the following result for an appropriate choice of $\kappa$ is obtained straightforwardly.

\begin{corollary}\label{cor:kappa}
Let the assumptions in Lemma \ref{lem:stable} hold. A sufficient choice of $\kappa$ to stabilize the system $\dot{\mf{e}}(t) = \mf{A}^\ast(\kappa,t)\mf{e}(t)$ is given by $\kappa > c^2/(2 r_1 \lambda_\mathcal{G}),$ with $c$ and $r_1$ defined in Assumption \ref{assum:bounds} and $\lambda_\mathcal{G}$ being the algebraic connectivity of the graph. 
\end{corollary}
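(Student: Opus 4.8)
The plan is to read the sufficient value of $\kappa$ directly off the Lyapunov analysis already carried out in the proof of Lemma \ref{lem:stable}, since the corollary merely renders explicit the threshold $\kappa_0$ that surfaced there. I would begin by recalling the splitting $\mf{e}(t) = \bar{\mf{e}}(t) + \tilde{\mf{e}}(t)$ into the consensus component $\bar{\mf{e}}(t)$ and the disagreement component $\tilde{\mf{e}}(t)$, together with the two facts established in that proof: that $\bar{\mf{e}}(t)$ obeys the centralized Kalman-Bucy error dynamics $\dot{\bar{\mf{e}}}(t) = (\mf{I}_N \otimes \mf{A}_c(t))\bar{\mf{e}}(t)$ and is therefore asymptotically stable for \emph{any} $\kappa$, so that no lower bound on $\kappa$ is needed to control it, and that the disagreement term evolves as $\dot{\tilde{\mf{e}}}(t) = \mf{A}^\ast(\kappa,t)\tilde{\mf{e}}(t)$.

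The central step is then to inspect the bound on the Lyapunov derivative for the disagreement dynamics, namely \eqref{eq:lyap-disagr3}, which yields $\dot{V}(\tilde{\mf{e}}(t)) \leq (c^2/r_1 - 2\kappa\lambda_\mathcal{G})\|\tilde{\mf{e}}(t)\|^2$ using only the uniform bounds $\|\mf{C}(t)\| \leq c$ and $\|\mf{R}(t)^{-1}\| \leq 1/r_1$ from Assumption \ref{assum:bounds}, together with the eigenvalue estimate $\tilde{\mf{e}}(t)^\top(\mf{Q}_\mathcal{G}\otimes\mf{I}_n)\tilde{\mf{e}}(t) \geq \lambda_\mathcal{G}\|\tilde{\mf{e}}(t)\|^2$, which holds because $\tilde{\mf{e}}(t) = (\mf{H}\otimes\mf{I}_n)\mf{e}(t)$ lies in the orthogonal complement of the consensus direction spanned by the null space of $\mf{Q}_\mathcal{G}$. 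This quadratic form is negative definite precisely when the scalar coefficient is negative, i.e. exactly when $\kappa > c^2/(2 r_1 \lambda_\mathcal{G})$.

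Hence the explicit value $\kappa_0 = c^2/(2 r_1 \lambda_\mathcal{G})$ is a sufficient choice: for any $\kappa > \kappa_0$ both $\bar{\mf{e}}(t)$ and $\tilde{\mf{e}}(t)$ decay to the origin, and therefore so does $\mf{e}(t) = \bar{\mf{e}}(t) + \tilde{\mf{e}}(t)$, which stabilizes $\dot{\mf{e}}(t) = \mf{A}^\ast(\kappa,t)\mf{e}(t)$. I expect no genuine obstacle here beyond confirming that the constants $c$, $r_1$, and $\lambda_\mathcal{G}$ entering the threshold are exactly the quantities named in Assumption \ref{assum:bounds} and the graph's algebraic connectivity, since all the analytical work — the choice of Lyapunov function $V(\tilde{\mf{e}}(t)) = \tilde{\mf{e}}(t)^\top(\mf{I}_N\otimes\mf{P}(t)^{-1})\tilde{\mf{e}}(t)$, the cancellation of the drift terms via the Riccati identity \eqref{eq:deriv-p}, and the spectral bound on $\mf{Q}_\mathcal{G}$ — has already been discharged in Lemma \ref{lem:stable}. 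The corollary is thus immediate from that proof.
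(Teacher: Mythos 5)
Your proposal is correct and follows exactly the paper's route: the paper states that Corollary \ref{cor:kappa} is obtained straightforwardly from the proof of Lemma \ref{lem:stable}, where the explicit threshold $\kappa_0 = c^2/(2 r_1 \lambda_\mathcal{G})$ already appears as the condition making the bound \eqref{eq:lyap-disagr3} on $\dot{V}(\tilde{\mf{e}}(t))$ negative definite, while $\bar{\mf{e}}(t)$ is stable for any $\kappa$. Your unpacking of that argument, including the justification that the spectral bound $\tilde{\mf{e}}(t)^\top(\mf{Q}_\mathcal{G}\otimes\mf{I}_n)\tilde{\mf{e}}(t) \geq \lambda_\mathcal{G}\|\tilde{\mf{e}}(t)\|^2$ holds because $\tilde{\mf{e}}(t)$ is orthogonal to the null space of $\mf{Q}_\mathcal{G}$, matches the paper's reasoning.
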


\subsection{Analysis under Consensus Assumption for $\mf{P}_i(t)$}

Recall our goal to prove item \ref{it:opt-ltv} of Theorem \ref{th:main}. Taking the aggregate covariance $\bm{\mathcal{P}}(t)$ with dynamics \eqref{eq:cov-dyn}, we aim to verify that $\bm{\mathcal{P}}(t) \rightarrow \mf{U}_N \otimes \mf{P}(t)$ as $t \rightarrow \infty$, $\kappa\rightarrow \infty$. This is, we want to show that the aggregate error covariance for the estimates of ODEFTC behaves as that of $N$ centralized Kalman-Bucy filters using the global measurement information.
Therefore, we start by defining the covariance mismatch between ODEFTC and $N$ centralized filters.

Recalling that $\mf{A}_c(t) = \mf{A}(t) - \mf{K}(t) \mf{C}(t)$ and \eqref{eq:ckb-p}, we can write, by adding and subtracting $\mf{K}(t)\mf{R}(t)\mf{K}(t)(t)^\top$,
\begin{equation}\label{eq:cov-p-central}
\begin{aligned}
    \dot{\mf{P}}(t) = \mf{A}_c(t) \mf{P}(t) + \mf{P}(t) \mf{A}_c(t)^\top+\mf{W}(t) + \mf{K}(t)\mf{R}(t)\mf{K}(t)^\top
\end{aligned}
\end{equation}
Noting that $\kappa (\mf{Q}_\mathcal{G} \otimes \mf{P}(t))(\mf{U}_N \otimes \mf{P}(t)) =\mf{0}$ due to $\mf{Q}_\mathcal{G} \mf{U}_N = \mf{0}$ and considering \eqref{eq:cov-p-central}, we can write the following error covariance dynamics for $N$ centralized filters:
\begin{equation}
\begin{aligned}
    (\mf{U}_N \otimes \dot{\mf{P}}(t)) &= (\diag_{i=1}^N(\mf{A}_c(t)) - \kappa (\mf{Q}_\mathcal{G} \otimes \mf{P}(t)))(\mf{U}_N \otimes \mf{P}(t)) \\ &\quad + (\mf{U}_N \otimes \mf{P}(t))(\diag_{i=1}^N(\mf{A}_c(t)) - \kappa (\mf{Q}_\mathcal{G} \otimes \mf{P}(t)))^\top \\  &\quad + \mf{U}_N \otimes \mf{W}(t) + \diag_{i=1}^N(\mf{K}(t))(\mf{U}_N \otimes \mf{R}(t)) \diag_{i=1}^N(\mf{K}(t))^\top.
\end{aligned}
\end{equation}

With this, we define the covariance mismatch between ODEFTC and $N$ centralized filters as 
\begin{equation}\label{eq:mismatch-x}
\mf{X}(t) = \bm{\mathcal{P}}(t) - \mf{U}_N \otimes \mf{P}(t)
\end{equation}
and we write
\begin{equation}\label{eq:dyn-x}
    \dot{\mf{X}}(t) = \mf{A}^\ast(\kappa,t) \mf{X}(t) + \mf{X}(t) \mf{A}^\ast(\kappa, t)^\top + \bm{\Sigma}(t),
\end{equation}
where $\bm{\Sigma}(t)$ contains terms related to the differences between $\mf{A}_{i}(t)$ and $\mf{A}_c(t)$, as well as between $\mf{K}(t)$ and $\mf{K}_{i}(t)$. Particularly, let $\mf{G}_i(t) = \mf{C}_i(t)^\top \mf{R}_i(t)^{-1}(t) \mf{C}_i(t)$, $\mf{G}(t) = \mf{C}(t)^\top \mf{R}(t)^{-1}(t) \mf{C}(t) = \sum_{i=1}^N \mf{G}_i(t)$ and $\mf{G}_d(t) = \diag_{i=1}^N( \mf{G}_i(t))$, and with $\mf{P}_i(t) = \mf{P}(t)$, we have
\begin{equation}\label{eq:sigma}
\begin{aligned}
    \bm{\Sigma}(t) &= \diag_{i=1}^N(\mf{A}_i(t) - \mf{A}_c(t))(\mf{U}_N \otimes \mf{P}(t)) \\ &\quad + (\mf{U}_N \otimes \mf{P}(t))  \diag_{i=1}^N( \mf{A}_i(t) - \mf{A}_c(t))^\top \\&\quad + \diag_{i=1}^N(\mf{K}_i(t))\mf{R}(t)\diag_{i=1}^N(\mf{K}_i(t)) \\& \quad - \diag_{i=1}^N(\mf{K}(t))(\mf{U}_N \otimes \mf{R}(t))\diag_{i=1}^N(\mf{K}(t))\\
    &=  N^2(\mf{I}_N \otimes \mf{P}(t)) \mf{G}_d(t) (\mf{I}_N \otimes \mf{P}(t))  \\&\quad - N (\mf{U}_N \otimes \mf{P}(t)) \mf{G}_d(t) (\mf{I}_N \otimes \mf{P}(t))  \\&\quad- N(\mf{I}_N \otimes \mf{P}(t)) \mf{G}_d (\mf{U}_N \otimes \mf{P}(t)) + \mf{U}_N \otimes \mf{P}(t)\mf{G}(t)\mf{P}(t).
\end{aligned}
\end{equation}
In order to prove item \ref{it:opt-ltv} of Theorem \ref{th:main}, we need to analyze the convergence of $\mf{X}(t)$. Note that it has stable dynamics \eqref{eq:dyn-x} for an appropriate choice of $\kappa$ according to Lemma \ref{lem:stable}. Therefore, the presence of $\bm{\Sigma}(t)$, which is a disturbance caused by the difference of available measurement information at each node, is the only element preventing the convergence of $\mf{X}(t)$ to the origin. We will show that increasing the value of $\kappa$ serves to diminish the effect of this disturbance.

Before proving this, we derive some technical lemmas.

\begin{lemma}\label{lem:sigma-ort}
    The disturbance $\bm{\Sigma}(t)$ in \eqref{eq:dyn-x} fulfills $$(\mathds{1}_N^\top \otimes \mf{I}_n)\bm{\Sigma}(t)(\mathds{1}_N \otimes \mf{I}_n) = \mf{0}.$$
\end{lemma}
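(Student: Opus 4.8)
The plan is to evaluate the two-sided product directly on the four-term expansion of $\bm{\Sigma}(t)$ given in \eqref{eq:sigma}, relying entirely on the mixed-product property $(\mf{A}\otimes\mf{B})(\mf{C}\otimes\mf{D}) = (\mf{A}\mf{C})\otimes(\mf{B}\mf{D})$ of Kronecker products. First I would record the elementary identities $\mathds{1}_N^\top\mf{U}_N = N\mathds{1}_N^\top$, $\mf{U}_N\mathds{1}_N = N\mathds{1}_N$ and $\mathds{1}_N^\top\mathds{1}_N = N$, together with $(\mathds{1}_N^\top\otimes\mf{I}_n)(\mf{I}_N\otimes\mf{P}(t)) = \mathds{1}_N^\top\otimes\mf{P}(t)$ and its transpose. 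These allow me to pull the outer all-ones factors through the Kronecker blocks and contract the $N$-dimensional index systematically.

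The key intermediate step is the contraction of the block-diagonal factor $\mf{G}_d(t) = \diag_{i=1}^N(\mf{G}_i(t))$. I would establish that sandwiching it between a block-row and a block-column of copies of $\mf{P}(t)$ collapses the diagonal sum, namely
\[
(\mathds{1}_N^\top\otimes\mf{P}(t))\,\mf{G}_d(t)\,(\mathds{1}_N\otimes\mf{P}(t)) = \textstyle\sum_{i=1}^N \mf{P}(t)\mf{G}_i(t)\mf{P}(t) = \mf{P}(t)\mf{G}(t)\mf{P}(t),
\]
where the last equality uses $\mf{G}(t) = \sum_{i=1}^N\mf{G}_i(t)$. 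This is the only place where the block-diagonal structure genuinely interacts with the all-ones vectors, so it is the step I would treat most carefully.

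With these tools, I would reduce each of the four summands of $\bm{\Sigma}(t)$ in turn; in every case the outer $(\mathds{1}_N^\top\otimes\mf{I}_n)$ and $(\mathds{1}_N\otimes\mf{I}_n)$ collapse the term to a scalar multiple of $\mf{P}(t)\mf{G}(t)\mf{P}(t)$. The first term yields $+N^2\,\mf{P}(t)\mf{G}(t)\mf{P}(t)$; the two cross terms involving $\mf{U}_N$ each pick up an extra factor $N$ from $\mathds{1}_N^\top\mf{U}_N$ or $\mf{U}_N\mathds{1}_N$, giving $-N^2\,\mf{P}(t)\mf{G}(t)\mf{P}(t)$ apiece; and the final term $\mf{U}_N\otimes\mf{P}(t)\mf{G}(t)\mf{P}(t)$ yields $+N^2\,\mf{P}(t)\mf{G}(t)\mf{P}(t)$ through $\mathds{1}_N^\top\mf{U}_N\mathds{1}_N = N^2$. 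Summing the four contributions gives $N^2 - N^2 - N^2 + N^2 = 0$, which is exactly the claim.

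I do not anticipate a genuine obstacle here: the result is bookkeeping in Kronecker algebra, and the only real risk is a sign error or a misplaced factor of $N$ in tracking how many all-ones contractions each term absorbs. The conceptual content is simply that projecting $\bm{\Sigma}(t)$ onto the consensus direction annihilates it, which is precisely what is needed afterwards to argue that the disturbance in \eqref{eq:dyn-x} is confined to the disagreement subspace and can therefore be suppressed by increasing $\kappa$.
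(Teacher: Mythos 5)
Your proposal is correct and takes essentially the same route as the paper's own proof: both expand $\bm{\Sigma}(t)$ into the four terms of \eqref{eq:sigma}, use the key contraction $(\mathds{1}_N^\top\otimes\mf{P}(t))\,\mf{G}_d(t)\,(\mathds{1}_N\otimes\mf{P}(t)) = \mf{P}(t)\mf{G}(t)\mf{P}(t)$ together with $\mathds{1}_N^\top\mf{U}_N = N\mathds{1}_N^\top$ and $\mathds{1}_N^\top\mf{U}_N\mathds{1}_N = N^2$, and conclude from the cancellation $N^2 - N^2 - N^2 + N^2 = 0$. There is no gap; your bookkeeping of the factors of $N$ and the signs matches the paper exactly.
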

\begin{proof}
Recalling that $\bm{\Sigma}(t)$ was composed of several terms in \eqref{eq:sigma}, we analyze each term as follows. 
The first term yields
\begin{equation}
\begin{aligned}
    &N^2 (\mathds{1}^\top \otimes \mf{I}_N) (\mf{I}_N \otimes \mf{P}(t)) \mf{G}_d(t) (\mf{I}_N \otimes \mf{P}(t)) (\mathds{1}_N \otimes \mf{I}_N) \\& = N^2 (\mathds{1}_N^\top \otimes \mf{P}(t)) \mf{G}_d(t) (\mathds{1}_N \otimes \mf{P}(t)) \\&= N^2 \mf{P}(t) \textstyle \sum_{i=1}^N \mf{G}_i(t) = N^2\mf{P}(t) \mf{G}(t) \mf{P}(t).
\end{aligned}
\end{equation}
The second and third terms are symmetric. Analyzing one of them, we reach
\begin{equation}
\begin{aligned}
    &N (\mathds{1}^\top \otimes \mf{I}_N) (\mf{U}_N \otimes \mf{P}(t)) \mf{G}_d(t) (\mf{I}_N \otimes \mf{P}(t)) (\mathds{1}_N \otimes \mf{I}_N) \\& = N (\mathds{1}^\top \mf{U}_N \otimes \mf{P}(t))\mf{G}_d(t) (\mathds{1}_N \otimes \mf{P}(t)) \\&= N^2 (\mathds{1}_N^\top \otimes \mf{P}(t)) \mf{G}_d(t) (\mathds{1}_N \otimes \mf{P}(t)) = N^2 \mf{P}(t) \mf{G}(t) \mf{P}(t).
\end{aligned}
\end{equation}
For the last term, we have
\begin{equation}
\begin{aligned}
    &(\mathds{1}^\top \otimes \mf{I}_N) (\mf{U}_N \otimes \mf{P}(t)\mf{G}(t)\mf{P}(t)) (\mathds{1}_N \otimes \mf{I}_N) \\& = \mathds{1}_N^\top \mf{U}_N \mathds{1}_N \otimes \mf{P}(t) \mf{G}(t) \mf{P}(t) = N^2 \mf{P}(t) \mf{G}(t) \mf{P}(t),
\end{aligned}
\end{equation}
noting that $\mathds{1}_N^\top \mf{U}_N \mathds{1}_N = \mathds{1}_N^\top \mathds{1}_N \mathds{1}_N^\top \mathds{1}_N = N^2 $. 
Considering the terms above and \eqref{eq:sigma}, the claim in the statement of the lemma follows. In addition, due to symmetry, $(\mathds{1}^\top_N \otimes \mf{I}_n)\bm{\Sigma}(t) = \mf{0}$ and $\bm{\Sigma}(t)(\mathds{1}_N \otimes \mf{I}_n) = \mf{0}$ also hold.
\end{proof}

\begin{lemma}\label{lem:ineq-kappa}
    Let the conditions and notation from Lemma \ref{lem:stable} hold. Set $\kappa > \kappa_0$, and separate it into $\kappa = \kappa_1 + \tilde{\kappa}$, with $\kappa > \kappa_1 > \kappa_0$. Then, the following inequality holds
\begin{equation}\label{eq:ineq}
    \dot{\mf{N}}(t) + \mf{NA}^\ast(\kappa,t) + \mf{A}^\ast(\kappa,t) ^\top \mf{N}(t) \preceq - \mf{L},
\end{equation}
with $\mf{N}(t) = \mf{I}_N \otimes \mf{P}(t)^{-1}$, $\mf{L} = 2\tilde{\kappa}\lambda_{\mathcal{G}}(\mf{H} \otimes \mf{I}_n) \succeq \mf{0}$ and $\mf{H} = \mf{I}_N - \mf{U}_N/N$.
\end{lemma}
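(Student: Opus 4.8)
The plan is to compute the left-hand side of \eqref{eq:ineq} in closed form and then reduce it, through the consensus/disagreement splitting already used in the proof of Lemma \ref{lem:stable}, to a residual matrix inequality that the strict gap $\kappa_1>\kappa_0$ is meant to close. First I would use $\mf{N}(t)=\mf{I}_N\otimes\mf{P}(t)^{-1}$ together with $\mf{A}^\ast(\kappa,t)=\diag_{i=1}^N(\mf{A}_i(t))-\kappa(\mf{Q}_\mathcal{G}\otimes\mf{P}(t))$, which holds under the consensus assumption $\mf{P}_i=\mf{P}$. Since $(\mf{I}_N\otimes\mf{P}^{-1})(\mf{Q}_\mathcal{G}\otimes\mf{P})=\mf{Q}_\mathcal{G}\otimes\mf{I}_n$ and $\mf{Q}_\mathcal{G}=\mf{Q}_\mathcal{G}^\top$, the Laplacian part collapses to $-2\kappa(\mf{Q}_\mathcal{G}\otimes\mf{I}_n)$, while the block-diagonal part reproduces exactly the per-node quantity computed inside Lemma \ref{lem:stable}. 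Writing $\mf{D}_i(t):=\mf{P}^{-1}\mf{A}_i+\mf{A}_i^\top\mf{P}^{-1}+\frac{\text{d}}{\text{d}t}(\mf{P}^{-1})=\mf{G}-\mf{P}^{-1}\mf{W}\mf{P}^{-1}-2N\mf{G}_i$, the left-hand side of \eqref{eq:ineq} equals $\diag_{i=1}^N(\mf{D}_i)-2\kappa(\mf{Q}_\mathcal{G}\otimes\mf{I}_n)$.

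I would then split $\kappa=\kappa_1+\tilde\kappa$ and extract $-\mf{L}$. Because $\mf{Q}_\mathcal{G}\succeq\lambda_\mathcal{G}\mf{H}$ (spectral decomposition of the Laplacian), we have $\mf{Q}_\mathcal{G}\otimes\mf{I}_n\succeq\lambda_\mathcal{G}(\mf{H}\otimes\mf{I}_n)$, hence $-2\tilde\kappa(\mf{Q}_\mathcal{G}\otimes\mf{I}_n)\preceq-2\tilde\kappa\lambda_\mathcal{G}(\mf{H}\otimes\mf{I}_n)=-\mf{L}$. After this step the lemma reduces to the residual inequality
\begin{equation}
\diag_{i=1}^N(\mf{D}_i(t))-2\kappa_1(\mf{Q}_\mathcal{G}\otimes\mf{I}_n)\preceq\mf{0}.
\end{equation}

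To prove the residual inequality I would test it against an arbitrary $\mf{z}=\col_{i=1}^N(\mf{z}_i)$ and split each block as $\mf{z}_i=\bar{\mf{z}}+\bm{\delta}_i$, with average $\bar{\mf{z}}=(1/N)\textstyle\sum_i\mf{z}_i$ and disagreement components $\bm{\delta}_i=\mf{z}_i-\bar{\mf{z}}$ obeying $\sum_i\bm{\delta}_i=\mf{0}$, exactly as in Lemma \ref{lem:stable}. The quadratic form then decomposes into a consensus part, a disagreement part and a cross part. The consensus part is harmless despite the Laplacian being blind to $\bar{\mf{z}}$, since $\textstyle\sum_i\mf{D}_i=-N(\mf{G}+\mf{P}^{-1}\mf{W}\mf{P}^{-1})\preceq\mf{0}$. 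The disagreement part is treated as in Lemma \ref{lem:stable}: bounding $\mf{D}_i\preceq\mf{G}\preceq(c^2/r_1)\mf{I}_n$ and using $\mf{z}^\top(\mf{Q}_\mathcal{G}\otimes\mf{I}_n)\mf{z}\geq\lambda_\mathcal{G}\sum_i\|\bm{\delta}_i\|^2$ leaves the coefficient $c^2/r_1-2\kappa_1\lambda_\mathcal{G}$, which is negative precisely because $\kappa_1>\kappa_0=c^2/(2r_1\lambda_\mathcal{G})$.

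The hard part will be the cross term, which equals $-4N\bar{\mf{z}}^\top\sum_i\mf{G}_i\bm{\delta}_i$ and is the only indefinite contribution; it is nonzero exactly because the block-diagonal $\mf{G}_d$ does not commute with the consensus/disagreement projectors, so it cannot be dismissed by a crude scalar bound on $\diag_i(\mf{D}_i)$. The observation I would rely on is that this term is controlled by the consensus term itself: by the Cauchy--Schwarz inequality in the $\mf{G}_i$-seminorm, $|\sum_i\bar{\mf{z}}^\top\mf{G}_i\bm{\delta}_i|\leq(\bar{\mf{z}}^\top\mf{G}\bar{\mf{z}})^{1/2}(\sum_i\bm{\delta}_i^\top\mf{G}_i\bm{\delta}_i)^{1/2}$, using $\sum_i\mf{G}_i=\mf{G}$, so the cross term vanishes on any direction where $\bar{\mf{z}}^\top\mf{G}\bar{\mf{z}}=0$. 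A Young-type split would then absorb it into the negative consensus contribution $-N\bar{\mf{z}}^\top\mf{G}\bar{\mf{z}}$ on one side and, on the other, into the surplus disagreement coercivity $2(\kappa_1-\kappa_0)\lambda_\mathcal{G}\sum_i\|\bm{\delta}_i\|^2$ reinforced by the term $-2N\sum_i\bm{\delta}_i^\top\mf{G}_i\bm{\delta}_i$ that I discarded in the crude disagreement bound above. Balancing these two absorptions is the delicate point, and it is where the uniform bounds $p_1\mf{I}_n\preceq\mf{P}(t)\preceq p_2\mf{I}_n$ from Remark \ref{rem:p-ckb} and the strictness of $\kappa_1>\kappa_0$ are genuinely needed; I expect the remaining algebra to be routine.
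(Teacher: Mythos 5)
Your reduction is correct and in fact more exact than the paper's own argument: under $\mf{P}_i(t)=\mf{P}(t)$ the left-hand side of \eqref{eq:ineq} equals $\diag_{i=1}^N(\mf{D}_i(t))-2\kappa(\mf{Q}_\mathcal{G}\otimes\mf{I}_n)$ with $\mf{D}_i=\mf{G}-\mf{P}^{-1}\mf{W}\mf{P}^{-1}-2N\mf{G}_i$, the extraction $-2\tilde{\kappa}(\mf{Q}_\mathcal{G}\otimes\mf{I}_n)\preceq-\mf{L}$ is valid, and the cross term $-4N\bar{\mf{z}}^\top\sum_{i}\mf{G}_i\bm{\delta}_i$ is indeed the crux. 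The gap is that the absorption you defer as ``routine'' cannot be completed for $\kappa_1$ merely above $\kappa_0$. Carrying out your own plan quantitatively: Cauchy--Schwarz and Young give, for any $\theta>0$,
\begin{equation}
4N\Bigl|\bar{\mf{z}}^\top\textstyle\sum_i\mf{G}_i\bm{\delta}_i\Bigr|\leq 2N\theta\,\bar{\mf{z}}^\top\mf{G}\bar{\mf{z}}+\frac{2N}{\theta}\textstyle\sum_i\bm{\delta}_i^\top\mf{G}_i\bm{\delta}_i.
\end{equation}
Fitting the first piece under the available $-N\bar{\mf{z}}^\top\mf{G}\bar{\mf{z}}$ forces $\theta\leq 1/2$, so the second piece is at least $4N\sum_i\bm{\delta}_i^\top\mf{G}_i\bm{\delta}_i$, of which your retained term $-2N\sum_i\bm{\delta}_i^\top\mf{G}_i\bm{\delta}_i$ cancels only half. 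The excess $2N\sum_i\bm{\delta}_i^\top\mf{G}_i\bm{\delta}_i$ can be of size $2N(c^2/r_1)\|\bm{\delta}\|^2=4N\kappa_0\lambda_\mathcal{G}\|\bm{\delta}\|^2$ (all information held by a single node, disagreement concentrated on that node), while the only remaining negative term is the surplus coercivity $2(\kappa_1-\kappa_0)\lambda_\mathcal{G}\|\bm{\delta}\|^2$. Hence your argument closes only when $\kappa_1-\kappa_0$ is of order $N\kappa_0$, not for every $\kappa_1>\kappa_0$; the bounds $p_1,p_2$ from Remark \ref{rem:p-ckb} cannot rescue this, since $\mf{P}$ has already been eliminated from the residual inequality.

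Moreover, this is not slack in your estimates: the inequality you are asked to prove is actually false for $\kappa_1$ close to $\kappa_0$, so no refinement of the balancing can succeed. Take $n=1$, a path graph with the informative node at one end, $\mf{C}_1=c$, $\mf{C}_i=0$ for $i\geq 2$, $\mf{R}=r_1\mf{I}_N$, $\mf{A}=a>0$, $\mf{W}=w>0$ small; then $\mf{G}_1=\mf{G}=c^2/r_1=2\kappa_0\lambda_\mathcal{G}$, all other $\mf{G}_i$ vanish, and the stationary scalar $p=\mf{P}(t)$ satisfies $w/p^2\to 0$ as $w\to 0$. Let $\mf{v}$ be the Fiedler vector of the path, $v_i=\cos\bigl((2i-1)\pi/(2N)\bigr)$, so that $\mf{Q}_\mathcal{G}\mf{v}=\lambda_\mathcal{G}\mf{v}$ and $\|\mf{v}\|^2=N/2$, and test $\mf{z}=-2v_1\mathds{1}_N+\mf{v}$. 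A direct evaluation gives
\begin{equation}
\mf{z}^\top\bigl(\diag_{i=1}^N(\mf{D}_i)-2\kappa\mf{Q}_\mathcal{G}+2\tilde{\kappa}\lambda_\mathcal{G}\mf{H}\bigr)\mf{z}=N\lambda_\mathcal{G}\bigl((4v_1^2+1)\kappa_0-\kappa_1\bigr)-N\frac{w}{p^2}\Bigl(4v_1^2+\frac{1}{2}\Bigr),
\end{equation}
which is strictly positive for small $w$ whenever $\kappa_1<(4v_1^2+1)\kappa_0\approx 5\kappa_0$, e.g.\ for $\kappa_1=2\kappa_0$. The reason the paper's own proof does not meet this obstruction is that it never computes the left-hand side of \eqref{eq:ineq} in closed form: it recycles the Lyapunov analysis of Lemma \ref{lem:stable}, which treats the consensus and disagreement components as autonomous systems, $\dot{\bar{\mf{e}}}=(\mf{I}_N\otimes\mf{A}_c)\bar{\mf{e}}$ and $\dot{\tilde{\mf{e}}}=\mf{A}^\ast\tilde{\mf{e}}$. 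Because $\diag_{i=1}^N(\mf{K}_i\mf{C}_i)$ does not commute with the projector $\mf{H}\otimes\mf{I}_n$, that decoupling silently drops a coupling term $\mf{P}\sum_i\mf{G}_i\tilde{\mf{e}}_i$ --- precisely the cross term your exact computation retains. So the step you flagged as delicate is a genuine obstruction rather than routine algebra, and what it exposes is that \eqref{eq:ineq} with the stated threshold $\kappa_0$ (independent of $N$ and of how the sensing is distributed among nodes) cannot be established; a correct version requires a strictly larger lower bound on $\kappa_1$.
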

\begin{proof}
    From the proof of Lemma \ref{lem:stable}, recall the system $\dot{\mf{e}}(t) = \mf{A}^\ast(\kappa, t)\mf{e}(t)$, where we had separated $\mf{e}(t) = \bar{\mf{e}}(t) + \tilde{\mf{e}}(t)$. Note that we have the following Lyapunov function
\begin{equation}\label{eq:lyap-combi}
\begin{aligned}
    V(\mf{e}(t)) &= \mf{e}(t)^\top (\mf{I}_N \otimes \mf{P}(t)^{-1})\mf{e}(t) \\
    &= \bar{\mf{e}}(t)^\top (\mf{I}_N \otimes \mf{P}(t)^{-1})\bar{\mf{e}}(t) + \tilde{\mf{e}}(t)^\top (\mf{I}_N \otimes \mf{P}(t)^{-1})\tilde{\mf{e}}(t) \\
    &= V(\bar{\mf{e}}(t)) + V(\tilde{\mf{e}}(t)),
\end{aligned}
\end{equation}
since the cross terms $\bar{\mf{e}}(t)(\mf{I}_N \otimes \mf{P}(t)^{-1})\tilde{\mf{e}}(t) = 0$ due to $\bar{\mf{e}}(t)$ and $\tilde{\mf{e}}(t)$ being orthogonal. 

With the dynamics \eqref{eq:dyn-ebar}, we have
 \begin{equation}\label{eq:lyap-cons2}
\begin{aligned}
\dot{V}(\bar{\mf{e}}(t)) = \bar{\mf{e}}(t)^\top (\mf{I}_N \otimes (\mf{P}(t)^{-1} \mf{A}_c(t) + \mf{A}_c(t)^\top \mf{P}(t)^{-1} + \textstyle \frac{\text{d}}{\text{d}t}({\mf{P}}(t)^{-1}))) \bar{\mf{e}}(t).
\end{aligned}
\end{equation}
With similar manipulations as in the proof of Lemma \ref{lem:stable}, it can be verified that
\begin{equation}\label{eq:p-inverse}
\begin{aligned}
    &\mf{P}(t)^{-1} \mf{A}_c(t) + \mf{A}_c(t)^\top \mf{P}(t)^{-1} + \textstyle \frac{\text{d}}{\text{d}t}({\mf{P}}(t)^{-1}) \\&= -\mf{P}(t)^{-1}\mf{W}(t)\mf{P}(t)^{-1} -\mf{C}(t)^\top\mf{R}(t)^{-1}\mf{C}(t) \preceq \mf{0},
\end{aligned}
\end{equation}
noting that $\mf{P}(t)^{-1}\mf{W}(t)\mf{P}(t)^{-1}$ and $\mf{C}(t)^\top\mf{R}(t)^{-1}\mf{C}(t)$ are positive semidefinite.
Therefore, \eqref{eq:lyap-cons2} becomes
\begin{equation}\label{lyap:cons-3}
\begin{aligned}
\dot{V}(\bar{\mf{e}}(t)) = -\bar{\mf{e}}(t)^\top (\mf{I}_N \otimes (\mf{P}(t)^{-1}\mf{W}(t)\mf{P}(t)^{-1}   + \mf{C}(t)^\top\mf{R}(t)^{-1} \mf{C}(t))) \bar{\mf{e}}(t) \leq 0.
\end{aligned}
\end{equation}

For the term $\tilde{\mf{e}}(t)$, recall that we had \eqref{eq:lyap-disagr3}. With the separation $\kappa = \kappa_1 + \tilde{\kappa}$, we can further write
\begin{equation}\label{eq:lyap-disagr-4}
\begin{aligned}
    \dot{V}(\tilde{\mf{e}}(t)) &\leq (c^2/r_1 - 2\kappa_1\lambda_{\mathcal{G}}) \| \tilde{\mf{e}}(t)\|^2 - 2\tilde{\kappa} \lambda_{\mathcal{G}}\| \tilde{\mf{e}}(t)\|^2 \\
    &\leq - 2\tilde{\kappa} \lambda_{\mathcal{G}}\| \tilde{\mf{e}}(t)\|^2,
\end{aligned}
\end{equation}
thanks to the choice $\kappa_1 > \kappa_0$.
With \eqref{eq:lyap-combi}, \eqref{lyap:cons-3} and \eqref{eq:lyap-disagr-4} it holds that
\begin{equation}\label{eq:lyap1}
    \dot{V}(\mf{e}(t)) \leq -2\tilde{\kappa} \lambda_{\mathcal{G}} \| \tilde{\mf{e}}(t) \|^2 = -2 \tilde{\kappa} \lambda_{\mathcal{G}} \mf{e}(t)^\top (\mf{H} \otimes \mf{I}_n) \mf{e}(t),
\end{equation}
noting that $\mf{H}^2 = \mf{H}$. In addition, considering the dynamics for $\mf{e}(t)$, the following also holds:
\begin{equation}\label{eq:lyap2}
    \dot{V}(\mf{e}(t)) = \mf{e}(t)^\top(\dot{\mf{N}}(t) + \mf{NA}^\ast(\kappa,t) + \mf{A}^\ast(\kappa,t) ^\top \mf{N}(t)) \mf{e}(t),
\end{equation}
where $\mf{N}(t) = \mf{I}_N \otimes \mf{P}(t)^{-1}$. Therefore, comparing \eqref{eq:lyap1} and \eqref{eq:lyap2}, the result in \eqref{eq:ineq} follows.
\end{proof}

Next, we can show that Theorem \ref{th:main} holds under the simplyfing assumption, recalling Remark \ref{rem:pi-p}.

\begin{lemma}\label{lem:proof-with-p}
    Let all conditions from Theorem \ref{th:main} hold, and assume that $\mf{P}_i(t) = \mf{P}(t), \, \forall i\in\mathcal{V}, \, \forall t \geq 0$, with $\mf{P}(t)$ from \eqref{eq:ckb-p}. Then, the estimates $\hat{\mf{x}}_i(t)$ from \eqref{eq:dkb-x} fulfill the properties stated in Theorem \ref{th:main}.
\end{lemma}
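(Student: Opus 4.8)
The plan is to establish the three items of Theorem~\ref{th:main} in turn, reusing the aggregate error and covariance dynamics \eqref{eq:error-dyn}--\eqref{eq:cov-dyn} together with the stability and structural lemmas already available. For item~\ref{it:unbiased} (unbiasedness), I would take expectations in the aggregate error dynamics \eqref{eq:error-dyn}. Since $\mf{w}(t)$ and $\mf{v}(t)$ are zero-mean, the forcing $\mf{n}(t)$ satisfies $\mathds{E}\{\mf{n}(t)\} = \mf{0}$, so $\mathds{E}\{\mf{e}(t)\}$ obeys the homogeneous linear system $\frac{\text{d}}{\text{d}t}\mathds{E}\{\mf{e}(t)\} = \mf{A}^\ast(\kappa,t)\mathds{E}\{\mf{e}(t)\}$. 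Under the ideal initialization $\hat{\mf{x}}_i(0) = \mf{x}_0$ we have $\mathds{E}\{\mf{e}(0)\} = \mf{0}$, hence $\mathds{E}\{\mf{e}(t)\} = \mf{0}$ for all $t$; more generally, the uniform asymptotic stability granted by Lemma~\ref{lem:stable} for $\kappa > \kappa_0$ forces $\mathds{E}\{\mf{e}(t)\} \to \mf{0}$, giving unbiasedness of every $\hat{\mf{x}}_i(t)$.

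For item~\ref{it:bounded} (boundedness), I would read \eqref{eq:cov-dyn} as a Lyapunov differential equation $\dot{\bm{\mathcal{P}}}(t) = \mf{A}^\ast(\kappa,t)\bm{\mathcal{P}}(t) + \bm{\mathcal{P}}(t)\mf{A}^\ast(\kappa,t)^\top + \mf{S}(t)$ driven by the positive-semidefinite forcing $\mf{S}(t) = \mf{U}_N\otimes\mf{W}(t) + \diag_{i=1}^N(\mf{K}_i(t))\mf{R}(t)\diag_{i=1}^N(\mf{K}_i(t))^\top$. This forcing is uniformly bounded: $\mf{W}(t), \mf{R}(t)$ are bounded by Assumption~\ref{assum:bounds}, and $\mf{K}_i(t) = N\mf{P}(t)\mf{C}_i(t)^\top\mf{R}_i(t)^{-1}$ is bounded because $\mf{P}(t)$ is (Remark~\ref{rem:p-ckb}). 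Since Lemma~\ref{lem:stable} makes $\mf{A}^\ast(\kappa,t)$ uniformly asymptotically, hence uniformly exponentially, stable, its transition matrix satisfies $\|\bm{\Phi}(t,s)\| \le M e^{-\beta(t-s)}$, and the variation-of-constants representation of $\bm{\mathcal{P}}(t)$ yields the uniform bound $\|\bm{\mathcal{P}}(t)\| \le M^2\|\bm{\mathcal{P}}(0)\| + M^2\sup_s\|\mf{S}(s)\|/(2\beta)$. The individual $\bm{\mathcal{P}}_i(t)$ are diagonal blocks of $\bm{\mathcal{P}}(t)$ and are therefore uniformly bounded as well.

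For item~\ref{it:opt-ltv} (near-optimality), I would work with the mismatch $\mf{X}(t)$ in \eqref{eq:mismatch-x} and its dynamics \eqref{eq:dyn-x}, again a stable Lyapunov differential equation, now forced by $\bm{\Sigma}(t)$. Two facts drive the argument: Lemma~\ref{lem:sigma-ort} confines $\bm{\Sigma}(t) = (\mf{H}\otimes\mf{I}_n)\bm{\Sigma}(t)(\mf{H}\otimes\mf{I}_n)$ to the disagreement subspace, and Lemma~\ref{lem:ineq-kappa} supplies a weighted Lyapunov certificate whose decay term $\mf{L} = 2\tilde{\kappa}\lambda_{\mathcal{G}}(\mf{H}\otimes\mf{I}_n)$ grows with $\kappa$ precisely on that subspace. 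Writing $\lim_{t\to\infty}\mf{X}(t) = \lim_{t\to\infty}\int_0^t\bm{\Phi}(t,s)\bm{\Sigma}(s)\bm{\Phi}(t,s)^\top\,\text{d}s$ and inserting the confinement of $\bm{\Sigma}$, the transient due to $\mf{X}(0)$ vanishes and the steady value is governed by $\|\bm{\Phi}(t,s)(\mf{H}\otimes\mf{I}_n)\|$, whose $\kappa$-dependent decay (equivalently, the gain associated with $\mf{L}$) produces a bound $b(\kappa) = O(1/\tilde{\kappa})$ on $\|\mf{X}(t)\|$ with $\|\bm{\Sigma}(t)\|$ independent of $\kappa$. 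Since $\bm{\mathcal{P}}_i(t) - \mf{P}(t)$ are diagonal blocks of $\mf{X}(t)$, this gives $\|\lim_{t\to\infty}(\bm{\mathcal{P}}_i(t) - \mf{P}(t))\| \le b(\kappa) \to 0$.

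The main obstacle is precisely the $\kappa$-scaling in item~\ref{it:opt-ltv}. The delicate point is that $\mf{L}$ certifies strong damping only along the disagreement direction, whereas the consensus component decays at the fixed, $\kappa$-independent rate of the centralized filter (as seen for $\bar{\mf{e}}(t)$ in the proof of Lemma~\ref{lem:stable}). Because the coupling hidden in $\diag_{i=1}^N(\mf{A}_i(t))$ leaks disagreement energy into this slowly damped consensus component, one must show quantitatively that the leakage, and hence the consensus part of the steady-state mismatch, is itself $O(1/\kappa)$; only after tracking this interaction does the full $\|\mf{X}(t)\|$ inherit the $b(\kappa)\to 0$ bound rather than just its disagreement projection. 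I expect this coupling estimate, rather than items~\ref{it:unbiased} or~\ref{it:bounded}, to be the crux of the argument.
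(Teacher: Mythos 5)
Your handling of items \ref{it:unbiased} and \ref{it:bounded} is essentially the paper's own argument (zero-mean forcing plus the ideal initialization; stable dynamics plus uniformly bounded forcing), so everything hinges on item \ref{it:opt-ltv}, and there your proposal has a genuine gap. The variation-of-constants route rests on the claim that $\|\bm{\Phi}(t,s)(\mf{H}\otimes\mf{I}_n)\|$ decays at a rate that grows with $\kappa$. That claim is false as stated: the disagreement subspace is \emph{not} invariant under $\mf{A}^\ast(\kappa,t)$, because $\diag_{i=1}^N(\mf{A}_i(t))$ has distinct diagonal blocks and therefore maps disagreement vectors partly into the consensus direction (concretely, $(\mathds{1}_N^\top\otimes\mf{I}_n)\mf{A}^\ast(\kappa,t)\tilde{\mf{e}} = -N\mf{P}(t)\sum_{i=1}^N \mf{C}_i(t)^\top\mf{R}_i(t)^{-1}\mf{C}_i(t)\tilde{\mf{e}}_i \neq \mf{0}$ in general), where the decay rate is the fixed, $\kappa$-independent rate of the centralized filter. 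Consequently $\int_0^t \bm{\Phi}(t,s)\bm{\Sigma}(s)\bm{\Phi}(t,s)^\top\,\text{d}s$ cannot be bounded by $O(1/\tilde{\kappa})$ from the confinement of $\bm{\Sigma}$ alone; one needs precisely the leakage estimate that you postpone in your final paragraph. Since you explicitly leave that estimate as an expectation rather than proving it, the proposal stops exactly where the proof of item \ref{it:opt-ltv} has to begin: the central $\kappa$-scaling bound is named but never established.

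For comparison, the paper avoids transition-matrix estimates altogether. It vectorizes \eqref{eq:dyn-x} and uses the weighted Lyapunov function $V(\bm{\uprho}) = \bm{\uprho}^\top(\mf{N}(t)\otimes\mf{N}(t))\bm{\uprho}$ with $\mf{N}(t)=\mf{I}_N\otimes\mf{P}(t)^{-1}$; Lemma \ref{lem:ineq-kappa} supplies the dissipation term $-(\mf{N}\otimes\mf{L}+\mf{L}\otimes\mf{N})$, and Lemma \ref{lem:sigma-ort} combined with the vectorization identities of the appendix shows that both this dissipation and the forcing $2\bm{\uprho}^\top(\mf{N}\otimes\mf{N})\bm{\upsigma}$ involve only the disagreement projection $\tilde{\bm{\uprho}}$, giving $\dot{V}<0$ whenever $\|\tilde{\bm{\uprho}}\| > \bar{\sigma}p_2/(2\tilde{\kappa}\lambda_{\mathcal{G}}p_1^2) =: b'(\tilde{\kappa})$, with $b'(\tilde{\kappa})\to 0$ as $\tilde{\kappa}\to\infty$. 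To be fair, your worry is not misplaced: even in the paper, the passage from this ultimate bound on the disagreement projection to the bound on the full mismatch (hence on the diagonal blocks $\bm{\mathcal{P}}_i(t)-\mf{P}(t)$, which also contain the consensus--consensus part of $\mf{X}(t)$) is asserted rather than detailed, and filling it in requires observing that the consensus--consensus block of $\mf{X}(t)$ obeys the stable, $\kappa$-independent centralized dynamics forced only by disagreement blocks, so it inherits the $O(1/\tilde{\kappa})$ bound. But the paper's route at least reduces the problem to the disagreement component with explicit constants; that quantitative reduction is exactly the content missing from your proposal.
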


\begin{proof}
Recall that the estimation error for the network is given by \eqref{eq:error-dyn}, and the error covariance by \eqref{eq:cov-dyn}. According to Lemma \ref{lem:stable}, the choice of $\kappa > \kappa_0$ guarantees their stability. 

Item \ref{it:unbiased} of Theorem \ref{th:main} follows from the initialization $\hat{\mf{x}}_i(0)= \mf{x}_0$, from which we have $\mathbb{E}\{\mf{e}(0)\}=\mf{0}$, along with $\mathbb{E}\{\mf{n}(t)\}=\mf{0}$ by virtue of $\mf{w}(t)$ and $\mf{v}(t)$ being zero-mean disturbances. Considering \eqref{eq:error-dyn}, together they imply $\mathbb{E}\{\mf{e}(t)\}=\mf{0}, \ \forall t\geq 0$.

Item \ref{it:bounded} holds due to the stable dynamics \eqref{eq:cov-dyn} from the choice of $\kappa$, and the fact that $\mf{U}_N \otimes \mf{W}(t) + \diag_{i=1}^N(\mf{K}_i(t)) \mf{R}(t) \diag_{i=1}^N(\mf{K}_i(t))^\top$ is uniformly bounded, recalling the uniform bounds for the matrices in Assumption \ref{assum:bounds} and noting that $\mf{K}_i(t)$ is bounded due to $\mf{P}_i(t) = \mf{P}(t)$ being bounded (see Remark \ref{rem:p-ckb}).

In the remainder, we prove the key result given in item \ref{it:opt-ltv} of Theorem \ref{th:main}, by analyzing the convergence of the covariance mismatch \eqref{eq:dyn-x}. Particularly, we need to show that the covariance mismatch $\mf{X}(t)$ from \eqref{eq:mismatch-x} fulfills $\mf{X}(t) \rightarrow \mf{0}$ as $\kappa \rightarrow \infty$.

For convenience, we will consider the vectorized version of \eqref{eq:dyn-x}, which is given by
\begin{equation}\label{eq:vect-X}
    \dot{\bm{\uprho}}(t) = \mf{A}_{\bm{\uprho}}(\kappa,t) \bm{\uprho}(t) + \bm{\upsigma}(t),
\end{equation}
where $\bm{\uprho}(t) = \vect (\bm{\mathcal{P}}(t))$, $\bm{\upsigma}(t) = \vect(\bm{\Sigma}(t))$ and
\begin{equation}
    \mf{A}_{\bm{\uprho}}(\kappa,t) = \mf{I}_N \otimes \mf{A}^\ast(\kappa, t) + \mf{A}^{\ast}(\kappa, t) \otimes \mf{I}_N.
\end{equation}

Consider the following Lyapunov function, 
\begin{equation}
    V(\bm{\uprho}(t)) = \bm{\uprho}(t)^\top(\mf{N}(t) \otimes \mf{N}(t))\bm{\uprho}(t),
\end{equation}
with $\mf{N}(t) = \mf{I}_N \otimes \mf{P}(t)^{-1}$. Taking the derivative, we reach
\begin{equation}\label{eq:lyap-rho}
\begin{aligned}
    \dot{V}(\bm{\uprho}(t)) &= \bm{\uprho}(t)^\top(\dot{\mf{N}}(t) \otimes \mf{N}(t) + \mf{N}(t) \otimes \dot{\mf{N}}(t) \\ &\quad + \mf{A}_{\bm{\uprho}}(\kappa,t)^\top (\mf{N}(t) \otimes \mf{N}(t)) + (\mf{N}(t) \otimes \mf{N}(t))\mf{A}_{\bm{\uprho}}(\kappa,t))\bm{\uprho}(t) \\
    &\quad +2 \bm{\uprho}(t)^\top (\mf{N}(t) \otimes \mf{N}(t)) \bm{\sigma}(t).
\end{aligned}
\end{equation}
In addition, recalling Lemma \ref{lem:ineq-kappa}, we have that
\begin{equation}
\begin{aligned}
    &\dot{\mf{N}}(t) \otimes \mf{N}(t) + \mf{N}(t) \otimes \dot{\mf{N}}(t) \\
    &\preceq (-\mf{L} -\mf{N}(t)\mf{A}^\ast(\kappa,t) - \mf{A}^\ast(\kappa,t)^\top \mf{N}(t)) \otimes \mf{N}(t) \\
    &\quad + \mf{N}(t) \otimes (-\mf{L} -\mf{N}(t)\mf{A}^\ast(\kappa,t) - \mf{A}^\ast(\kappa,t)^\top \mf{N}(t)),
\end{aligned}
\end{equation}
and
\begin{equation}
\begin{aligned}
    &\mf{A}_{\bm{\uprho}}(\kappa,t)^\top (\mf{N}(t) \otimes \mf{N}(t)) + (\mf{N}(t) \otimes \mf{N}(t))\mf{A}_{\bm{\uprho}}(\kappa,t) \\
    &= \mf{N}(t) \otimes (\mf{A}^\ast(\kappa,t)^\top \mf{N}(t) + \mf{N}(t) \mf{A}^\ast(\kappa,t)) \\
    &\quad + (\mf{N}(t) \mf{A}^\ast(\kappa,t) +\mf{A}^\ast(\kappa,t)^\top \mf{N}(t)) \otimes \mf{N}(t).
\end{aligned}
\end{equation}
Substituting these results into \eqref{eq:lyap-rho}, it follows that
\begin{equation}\label{eq:lyap-rho2}
\begin{aligned}
    \dot{V}(\bm{\uprho}(t)) &\leq - \bm{\uprho}(t)^\top(\mf{N}(t) \otimes \mf{L} + \mf{L} \otimes \mf{N}(t))\bm{\uprho}(t) \\
    &\quad+ 2 \bm{\uprho}(t)^\top (\mf{N}(t) \otimes \mf{N}(t)) \bm{\sigma}(t) \\
    &\leq - 2 \tilde{\kappa}\lambda_{\mathcal{G}} \bm{\uprho}(t)^\top(\mf{N}(t) \otimes \mf{H} \otimes \mf{I}_n + \mf{H} \otimes \mf{I}_n \otimes \mf{N}(t))\bm{\uprho}(t) \\&\quad + 2 \bm{\uprho}(t)^\top (\mf{N}(t) \otimes \mf{N}(t)) \bm{\sigma}(t).
\end{aligned}
\end{equation}
Note that the disagreement projection matrix $\mf{H}$ appears now. 
We can divide $\bm{\uprho}(t)$ into a consensus component $\bar{\bm{\uprho}}(t) = (\mf{U}_N \otimes \mf{I}_{Nn^2}) \bm{\uprho}(t)$ and a disagreement component $\tilde{\bm{\uprho}}(t) = (\mf{H} \otimes \mf{I}_{Nn^2})$. Through algebraic manipulations detailed in the Appendix, using the property $\vect(\mf{A} \mf{B} \mf{C}) = (\mf{C}^\top \otimes \mf{A}) \vect (\mf{B})$ of the vectorization operator, and the property for $\bm{\Sigma}(t)$ from Lemma \ref{lem:sigma-ort}, it can be verified that the terms in \eqref{eq:lyap-rho2} simplify to
\begin{equation}
\begin{aligned}
&\bm{\uprho}(t)^\top(\mf{N}(t) \otimes \mf{H} \otimes \mf{I}_n + \mf{H} \otimes \mf{I}_n \otimes \mf{N}(t))\bm{\uprho}(t) \\&= \tilde{\bm{\uprho}}(t)^\top (\mf{N}(t) \otimes \mf{I}_{Nn} + \mf{I}_{Nn} \otimes \mf{N}(t)) \tilde{\bm{\uprho}}(t),
\end{aligned}
\end{equation}
\begin{equation}
\begin{aligned}
    &\bm{\uprho}(t)^\top (\mf{N}(t) \otimes \mf{N}(t)) \bm{\sigma}(t) = \tilde{\bm{\uprho}}(t)^\top (\mf{N}(t) \otimes \mf{N}(t)) \bm{\sigma}(t),
\end{aligned}
\end{equation}
where only the disagreement component $\tilde{\bm{\uprho}}(t)$ is involved.

Then, \eqref{eq:lyap-rho2} can be further evaluated as
\begin{equation}\label{eq:lyap-rho3}
\begin{aligned}
    \dot{V}(\bm{\uprho}(t)) &\leq - 2 \tilde{\kappa}\lambda_{\mathcal{G}} \tilde{\bm{\uprho}}(t)^\top(\mf{N}(t) \otimes \mf{I}_{Nn} + \mf{I}_{Nn} \otimes \mf{N}(t))\tilde{\bm{\uprho}}(t) \\&\quad + 2 \tilde{\bm{\uprho}}(t)^\top (\mf{N}(t) \otimes \mf{N}(t)) \bm{\sigma}(t) \\
    &\leq -4 \tilde{\kappa}\lambda_{\mathcal{G}} \lambda_{\min}(\mf{N}(t)) \| \tilde{\bm{\uprho}}(t) \|^2 + 2 \lambda_{\max}(\mf{N}(t) \otimes \mf{N}(t))\| \bm{\sigma}(t) \| \|\tilde{\bm{\uprho}}(t) \| \\
    &\leq -4 \tilde{\kappa}\lambda_{\mathcal{G}} (1/p_2) \| \tilde{\bm{\uprho}}(t) \|^2 + 2 (1/p_1^2) \bar{\sigma} \|\tilde{\bm{\uprho}}(t) \|
\end{aligned}
\end{equation}
where we have used that $\lambda_{\min}(\mf{N}(t)) = \lambda_{\min}(\mf{P}(t)^{-1}) \geq (1/p_2)$ and $\lambda_{\max}(\mf{N}(t) \otimes \mf{N}(t)) = \lambda_{\max}(\mf{N}(t))^2 = \lambda_{\max}(\mf{P}(t)^{-1})^2 = (1/p_1^2)$,  with $p_1, p_2$ defined in Remark \ref{rem:p-ckb}. Finally, we have used that $\| \bm{\sigma}(t) \| \leq \bar{\sigma}, \forall t \geq 0$, since $\bm{\Sigma}(t)$ is uniformly bounded due to the boundedness of the model matrices from Assumption \ref{assum:bounds} and the boundedness of $\mf{P}(t)$ from Remark \ref{rem:p-ckb}.  
From the above, $\dot{V}(\bm{\uprho}(t)) < 0$ if
\begin{equation}
        \| \tilde{\bm{\uprho}}(t) \| > \bar{\sigma} p_2/(2 \tilde{\kappa}\lambda_\mathcal{G} p_1^2) =: b'(\tilde{\kappa}).
\end{equation}
This is, solutions converge to an invariant region around the desired value, which becomes smaller as $\kappa$ is increased, with $\lim_{\tilde{\kappa} \to \infty} b'(\tilde{\kappa}) = 0$, which implies the result in item \ref{it:opt-ltv} of Theorem \ref{th:main} for an appropriate bound $b(\kappa)$.

\end{proof}

\subsection{Effect of the Transitory in Convergence of $\mf{P}_i(t)$}

So far, the results given in Lemmas \ref{lem:stable} and \ref{lem:proof-with-p} have assumed that the nodes operate with consensus on their matrices $\mf{P}_i(t)$, having $\mf{P}_i(t) = \mf{P}(t), \, \forall t\geq 0$. However, $\mf{P}_i(t)$ is computed by \eqref{eq:dkb-p} and the auxiliary consensus algorithm \eqref{eq:dkb-z1}-\eqref{eq:dkb-z2}. Therefore, even though convergence to $\mf{P}(t)$ is achieved asymptotically according to Theorem \ref{th:p-inf}, we must verify that the derived results hold for the actual $\mf{P}_i(t)$. The following lemma formally establishes this intuition.

\begin{lemma}\label{lem:comparison} Let the conditions from Theorem \ref{th:p-inf} hold. Consider the matrices defined in \eqref{eq:a-ast} with $\mf{P}_i(t)$ computed as \eqref{eq:dkb-p}, and define 
\begin{equation}
\begin{aligned}
\mf{K}_{\mf{P}, i}(t) &= N \mf{P}(t) \mf{C}_i(t)^\top \mf{R}_i(t)^{-1}, \\
\mf{A}_{\mf{P}, i}(t) &= \mf{A}(t) - \mf{K}_{\mf{P}, i}(t) \mf{C}_i(t), \\
\mf{A}^\ast_\mf{P}(\kappa,t ) &= \diag_{i=1}^N (\mf{A}_{\mf{P},i}(t)) - \kappa \, (\mf{Q}_\mathcal{G} \otimes \mf{P}(t)), \\
\end{aligned}
\end{equation}    
as their equivalents with $\mf{P}_i(t) = \mf{P}(t), \, \forall t\geq 0, \, \forall i \in \mathcal{V}$.
If the origin is a uniformly asymptotically stable equilibrium for the system $\dot{\mf{e}}(t) = \mf{A}^\ast_\mf{P}(\kappa,t) \mf{e}(t)$, then it is also for the system $\dot{\mf{e}}(t) = \mf{A}^\ast(\kappa,t)\mf{e}(t)$. Moreover, $\bm{\mathcal{P}}(t)$ from \eqref{eq:cov-dyn} asymptotically converges to the same solution as its counterpart with $\mf{A}^\ast_{\mf{P}}(\kappa,t)$ and $\mf{K}_{\mf{P},i}(t)$.
\end{lemma}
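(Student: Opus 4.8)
The plan is to view the actual network dynamics as an asymptotically vanishing perturbation of the idealized (consensus-on-$\mf{P}$) dynamics, and then transfer both the stability and the asymptotic covariance behavior across this perturbation. First I would quantify the mismatch between the two dynamic matrices. Writing $\Delta(\kappa,t) := \mf{A}^\ast(\kappa,t) - \mf{A}^\ast_{\mf{P}}(\kappa,t)$ and recalling \eqref{eq:a-ast}, a direct computation using $(\mf{I}_N\otimes\mf{P}(t))(\mf{Q}_\mathcal{G}\otimes\mf{I}_n) = \mf{Q}_\mathcal{G}\otimes\mf{P}(t)$ gives
\begin{equation}
\Delta(\kappa,t) = -N\diag_{i=1}^N\big((\mf{P}_i(t) - \mf{P}(t))\mf{C}_i(t)^\top\mf{R}_i(t)^{-1}\mf{C}_i(t)\big) - \kappa\,\diag_{i=1}^N(\mf{P}_i(t) - \mf{P}(t))(\mf{Q}_\mathcal{G}\otimes\mf{I}_n).
\end{equation}
Both groups of terms are linear in $\mf{P}_i(t) - \mf{P}(t)$, which tends to $\mf{0}$ by item \ref{it:p-lim-ckb} of Theorem \ref{th:p-inf}, while $\mf{C}_i(t)$, $\mf{R}_i(t)^{-1}$ and $\mf{Q}_\mathcal{G}$ are bounded by Assumption \ref{assum:bounds}. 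Hence $\|\Delta(\kappa,t)\|\to 0$ as $t\to\infty$.

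For the stability claim I would use that, since $\mf{A}^\ast_{\mf{P}}(\kappa,t)$ is uniformly bounded (Assumption \ref{assum:bounds} together with the boundedness of $\mf{P}(t)$ from Remark \ref{rem:p-ckb}), its uniform asymptotic stability is equivalent to uniform exponential stability, so its transition matrix $\Phi_{\mf{P}}$ obeys $\|\Phi_{\mf{P}}(t,\tau)\|\le k\,e^{-\lambda(t-\tau)}$ for some $k,\lambda>0$ and all $t\ge\tau$. Writing the transition matrix $\Phi$ of $\dot{\mf{e}}(t)=\mf{A}^\ast(\kappa,t)\mf{e}(t)$ through variation of constants and applying Gronwall's inequality yields $\|\Phi(t,\tau)\|\le k\exp\!\big(-\lambda(t-\tau) + k\!\int_\tau^t\|\Delta(\kappa,s)\|\,\nd s\big)$. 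Because $\|\Delta(\kappa,t)\|\to 0$, I would fix $T$ so large that $k\|\Delta(\kappa,s)\|\le\lambda/2$ for $s\ge T$; then $\Phi$ decays uniformly exponentially on $[T,\infty)$, and boundedness on the finite interval $[0,T]$ (guaranteed by the bounded generator) extends this to uniform exponential, hence uniform asymptotic, stability of the perturbed system.

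For the covariance claim, let $\bm{\mathcal{P}}_{\mf{P}}(t)$ denote the solution of \eqref{eq:cov-dyn} with $\mf{A}^\ast_{\mf{P}}(\kappa,t)$ and $\mf{K}_{\mf{P},i}(t)$ in place of $\mf{A}^\ast(\kappa,t)$ and $\mf{K}_i(t)$, and set $\mf{D}(t)=\bm{\mathcal{P}}(t)-\bm{\mathcal{P}}_{\mf{P}}(t)$. Subtracting the two matrix equations and using $\mf{A}^\ast=\mf{A}^\ast_{\mf{P}}+\Delta$ gives
\begin{equation}
\dot{\mf{D}}(t) = \mf{A}^\ast_{\mf{P}}(\kappa,t)\mf{D}(t) + \mf{D}(t)\mf{A}^\ast_{\mf{P}}(\kappa,t)^\top + \mf{F}(t),
\end{equation}
where $\mf{F}(t) = \Delta(\kappa,t)\bm{\mathcal{P}}(t) + \bm{\mathcal{P}}(t)\Delta(\kappa,t)^\top + \diag_{i=1}^N(\mf{K}_i(t))\mf{R}(t)\diag_{i=1}^N(\mf{K}_i(t))^\top - \diag_{i=1}^N(\mf{K}_{\mf{P},i}(t))\mf{R}(t)\diag_{i=1}^N(\mf{K}_{\mf{P},i}(t))^\top$. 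The gain difference $\mf{K}_i(t)-\mf{K}_{\mf{P},i}(t)=N(\mf{P}_i(t)-\mf{P}(t))\mf{C}_i(t)^\top\mf{R}_i(t)^{-1}\to\mf{0}$, and $\bm{\mathcal{P}}(t)$ is uniformly bounded (from the stability just proven together with the uniformly bounded forcing of \eqref{eq:cov-dyn}); hence $\mf{F}(t)\to\mf{0}$. Vectorizing as in the proof of Lemma \ref{lem:proof-with-p}, the homogeneous operator $\mf{I}_{Nn}\otimes\mf{A}^\ast_{\mf{P}}(\kappa,t)+\mf{A}^\ast_{\mf{P}}(\kappa,t)\otimes\mf{I}_{Nn}$ inherits uniform exponential stability from $\mf{A}^\ast_{\mf{P}}(\kappa,t)$, so the same variation-of-constants argument, now with the vanishing input $\vect(\mf{F}(t))$, gives $\mf{D}(t)\to\mf{0}$. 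Thus $\bm{\mathcal{P}}(t)$ converges to the same limit as $\bm{\mathcal{P}}_{\mf{P}}(t)$, as claimed.

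The main obstacle is the transient phase: $\mf{P}_i(t)$ equals $\mf{P}(t)$ only asymptotically, so one cannot simply reuse the strict Lyapunov inequality of Lemma \ref{lem:ineq-kappa}. The argument must instead rest on the robustness of uniform exponential stability to asymptotically vanishing perturbations, which requires two points to be checked carefully: that no finite-time escape occurs before $\Delta(\kappa,t)$ becomes small (ensured by the uniform boundedness of $\mf{P}_i(t)$ from item \ref{it:p-bounded} of Theorem \ref{th:p-inf}, which keeps the generator and the forcing bounded on every finite interval), and that the forcing $\mf{F}(t)$ genuinely vanishes rather than merely remaining bounded, which follows from the vanishing of both $\Delta(\kappa,t)$ and the gain mismatch $\mf{K}_i(t)-\mf{K}_{\mf{P},i}(t)$.
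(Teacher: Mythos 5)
Your proposal is correct and follows essentially the same route as the paper: both express the actual error and covariance dynamics as the nominal consensus-on-$\mf{P}$ dynamics plus a perturbation that vanishes as $\mf{P}_i(t)\to\mf{P}(t)$ (your $\Delta(\kappa,t)$ is exactly the paper's $\bm{\Psi}(t)$, and your forcing term $\mf{F}(t)$ plays the role of its $\mf{G}(\bm{\mathcal{P}}(t))$). The only difference is that where the paper invokes Khalil's vanishing-perturbation results (Lemmas 9.4--9.6 and Theorem 9.1), you re-derive them explicitly via transition matrices, variation of constants and Gronwall's inequality --- a more self-contained but equivalent argument.
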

\begin{proof} Note that we can write
\begin{equation}
    \dot{\mf{e}}(t) = \mf{A}^\ast(\kappa,t) \mf{e}(t) = \mf{A}^\ast_\mf{P}(\kappa,t)\mf{e}(t) + \mf{g}(t), 
\end{equation}
which is now in terms of the ``nominal" system $ \dot{\mf{e}}(t) = \mf{A}^\ast_\mf{P}(\kappa,t)\mf{e}(t) $ and a ``perturbation" term due to the mismatch between $\mf{P}_i(t)$ and $\mf{P}(t)$, given by
 $   \mf{g}(t)  = \bm{\Psi}(t) \bar{\mf{e}}(t)$
with
\begin{equation}\label{eq:psi}
\begin{aligned}
    &\bm{\Psi}(t) = \mf{A}^\ast(\kappa, t) - \mf{A}^\ast_\mf{P}(\kappa,t) \\
    &= \diag_{i=1}^N(N (\mf{P}(t)-\mf{P}_i(t)) \mf{C}_i(t)^\top \mf{R}_i(t)^{-1} \mf{C}_i(t)) \\
    &\quad + \kappa \diag_{i=1}^N(\mf{P}(t) - \mf{P}_i(t))(\mf{Q}_\mathcal{G} \otimes \mf{I}_N).
\end{aligned}
\end{equation}
Note that 
 $   \| \mf{g}(t) \| \leq  \| \bm{\Psi}(t) \| \| \bar{\mf{e}}(t) \| $,  
with $\lim_{t\to\infty} \|\bm{\Psi}(t) \|= 0$ due to $\lim_{t\to\infty} (\mf{P}(t) - \mf{P}_i(t)) = \mf{0}$, according to Theorem \ref{th:p-inf}. Thus, by \cite[Lemmas 9.4 to 9.6]{Khalil2002}, if the origin is an asymptotically stable equilibrium for the nominal system, the same conclusion applies for the perturbed one.

Similarly, for the aggregate error covariance dynamics \eqref{eq:cov-dyn} we can write 
\begin{equation}
\begin{aligned}
\dot{\bm{\mathcal{P}}}(t) &= \mf{A}^\ast_{\mf{P}}(\kappa, t) \bm{\mathcal{P}}(t) + \bm{\mathcal{P}}(t) \mf{A}^\ast_{\mf{P}}(\kappa,t)^\top + \mf{U}_N \otimes \mf{W}(t)\\ &+ \diag_{i=1}^N(\mf{K}_{\mf{P},i}(t)) \mf{R}(t) \diag_{i=1}^N(\mf{K}_{\mf{P},i}(t))^\top + \mf{G}(\bm{\mathcal{P}}(t)),
\end{aligned}
\end{equation}
where the perturbation $\mf{G}(\bm{\mathcal{P}}(t))$ is due to the mismatch between $\mf{P}(t)$ and $\mf{P}_i(t)$, which appears in $\mf{A}^\ast(\kappa,t)$ and $\mf{K}_i(t)$:
\begin{equation}
\mf{G}(\bm{\mathcal{P}}(t)) = \bm{\Psi}(t) \bm{\mathcal{P}}(t)  + \bm{\mathcal{P}}(t) \bm{\Psi}(t)^\top + \bm{\Omega}(t),
\end{equation}
with the same $\Psi(t)$ from \eqref{eq:psi} and 
\begin{equation}
\begin{aligned}
    \bm{\Omega}(t) &= \diag_{i=1}^N(\mf{K}_{i}(t) \mf{R}_i(t) \mf{K}_{i}(t)^\top -\mf{K}_{\mf{P},i}(t) \mf{R}_i(t) \mf{K}_{\mf{P},i}(t)^\top)  \\
    &= \diag_{i=1}^N(N^2 (\mf{P}_i(t) - \mf{P}(t)) \mf{C}_i(t)^\top \mf{R}_i(t)^{-1} \mf{C}_i(t) (\mf{P}_i(t)  - \mf{P}(t))).
\end{aligned}
\end{equation}

Then, we have
 $   \| \mf{G}(\bm{\mathcal{P}}(t)) \| \leq 2 \| \bm{\Psi}(t) \| \| \bm{\mathcal{P}}(t) \| + \| \bm{\Omega}(t) \|$,
again with $$\lim_{t\to\infty} \| \bm{\Psi}(t) \| = 0, \ \lim_{t\to\infty} \| \bm{\Omega}(t) \|=0,$$ taking into account the convergence $\mf{P}_i(t) \rightarrow \mf{P}(t)$ from Theorem \ref{th:p-inf}. Since the perturbation $\mf{G}(\bm{\mathcal{P}}(t))$ vanishes over time, if the dynamics of the nominal system are stable, by a similar argument of continuity of solutions from \cite[Theorem 9.1]{Khalil2002} we have that both systems converge to the same solution.
\end{proof}

\subsection{Proof of Theorem \ref{th:main}}\label{sec:proof-th}

With the previous results, we can finally prove Theorem \ref{th:main} as follows. In Lemma \ref{lem:stable} and Corollary \ref{cor:kappa}, we have established a choice of consensus gain $\kappa$ to stabilize the estimation error dynamics, under the assumption that $\mf{P}_i(t) = \mf{P}(t)$. By Lemma \ref{lem:comparison}, the same choice of $\kappa$ also serves to stabilize the error dynamics under the actual matrix $\mf{P}_i(t)$ computed online by the ODEFTC algorithm as \eqref{eq:dkb-p}, recalling that $\mf{P}_i(t) \rightarrow \mf{P}(t)$ asymptotically according to Theorem \ref{th:p-inf}. Similarly, recall that Lemma \ref{lem:proof-with-p} proves that the claims in Theorem \ref{th:main} hold under the assumption of $\mf{P}_i(t) = \mf{P}(t)$. Again, from Lemma \ref{lem:comparison}, we can conclude that the results also hold for the original system, completing the proof of Theorem \ref{th:main} and showing that the true error covariance of the estimates also fulfills $\bm{\mathcal{P}}_i(t) \rightarrow \mf{P}(t)$.

\section{Simulation Example}
We provide a short simulation example to illustrate our findings. Consider a LTV system of the form \eqref{eq:sys} with
\begin{equation}
\mf{A}(t) =   \begin{bmatrix}
		0 & 0 & 0.5 \sin (t) & 0\\	
		0 & 0 & 0 & \sin (3t)\\
		0 & 0 & 0.5 \cos(t) & 0\\
		0 & 0 & 0 & \cos(3t)\\
		\end{bmatrix}, \ 
\mf{W} = \begin{bmatrix}
		0 & 0 & 0 & 0\\	
		0 & 0 & 0 & 0\\
		0 & 0 & 1 & 0\\
		0 & 0 & 0 & 1\\
		\end{bmatrix}
\end{equation}
observed by a network of $N=7$ sensors. The communication graph is shown in Figure \ref{fig:graph}. 
\begin{figure}
    \centering
    \includegraphics[width=0.5\columnwidth]{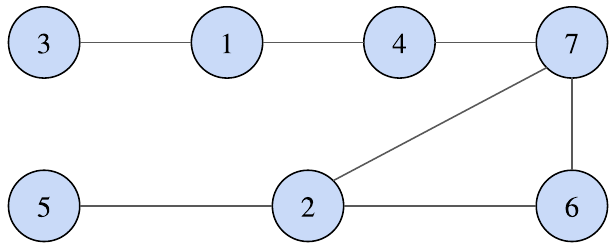}
    \caption{Communication graph $\mathcal{G}$.}
    \label{fig:graph}
\end{figure}
The local measurements of the nodes are given by \eqref{eq:meas} along with $\mf{C}_1 = \mf{C}_5 = \mf{C}_7 = [1, 0, 0, 0]$, $\mf{C}_2 = \mf{C}_4 = [0, 1, 0, 0,]$, $\mf{C}_3 = \mf{C}_6 = [0, 0, 1, 0]$, $\mf{R}_1(t) = \mf{R}_4(t) = \mf{R}_7(t) = 0.05 + 0.01 \sin (0.1 t)$, $\mf{R}_2 = \mf{R}_6 = 0.02$, $\mf{R}_3 = 0.03 + 0.01 \cos (0.1 t)$, $\mf{R}_5 = 0.035$.

We set the parameters of the ODEFTC algorithm as $\alpha = 20$, $\gamma = 0.7$ and $\xi = 10$ (which is an admissible value according to Lemma \ref{lem:Z}). Note that we have $\lambda_\mathcal{G} = 0.2679$, $\| \mf{C}(t) \| \leq 1.73 =: c$ and $\| \mf{R}(t) \| \geq 0.04 =: r_1$. Then, by Corollary \ref{cor:kappa}, a sufficient value of the consensus gain is given by $\kappa > 139.98$.

We have simulated the system using the Euler-Maruyama approach with discretization step $h=10^{-4}$, over 100 realizations of process and measurement noises. We have initialized the centralized filter to $\hat{\mf{x}}(0) = \mf{x}_0=[0,0,0,0]^\top$, $\mf{P}(0) = \mf{P}_0 = \mf{I}_n$, and ODEFTC at random to verify its performance even under unknown initial conditions. For those sequences, we have obtained estimates with the centralized Kalman-Bucy filter, and with ODEFTC using different values of $\kappa$. We computed the mean square error (MSE) at each time instant over the 100 noise realizations. For ODEFTC, we compute and plot the MSE at each node separately. Figure \ref{fig:mse} summarizes the results, showing that even for smaller values of $\kappa$ the estimation error remains stable, and that the results resemble those of the centralized filter as $\kappa$ is increased, validating the results from Theorem \ref{th:main}.
\begin{figure}
    \centering
    \includegraphics[width=0.6\columnwidth]{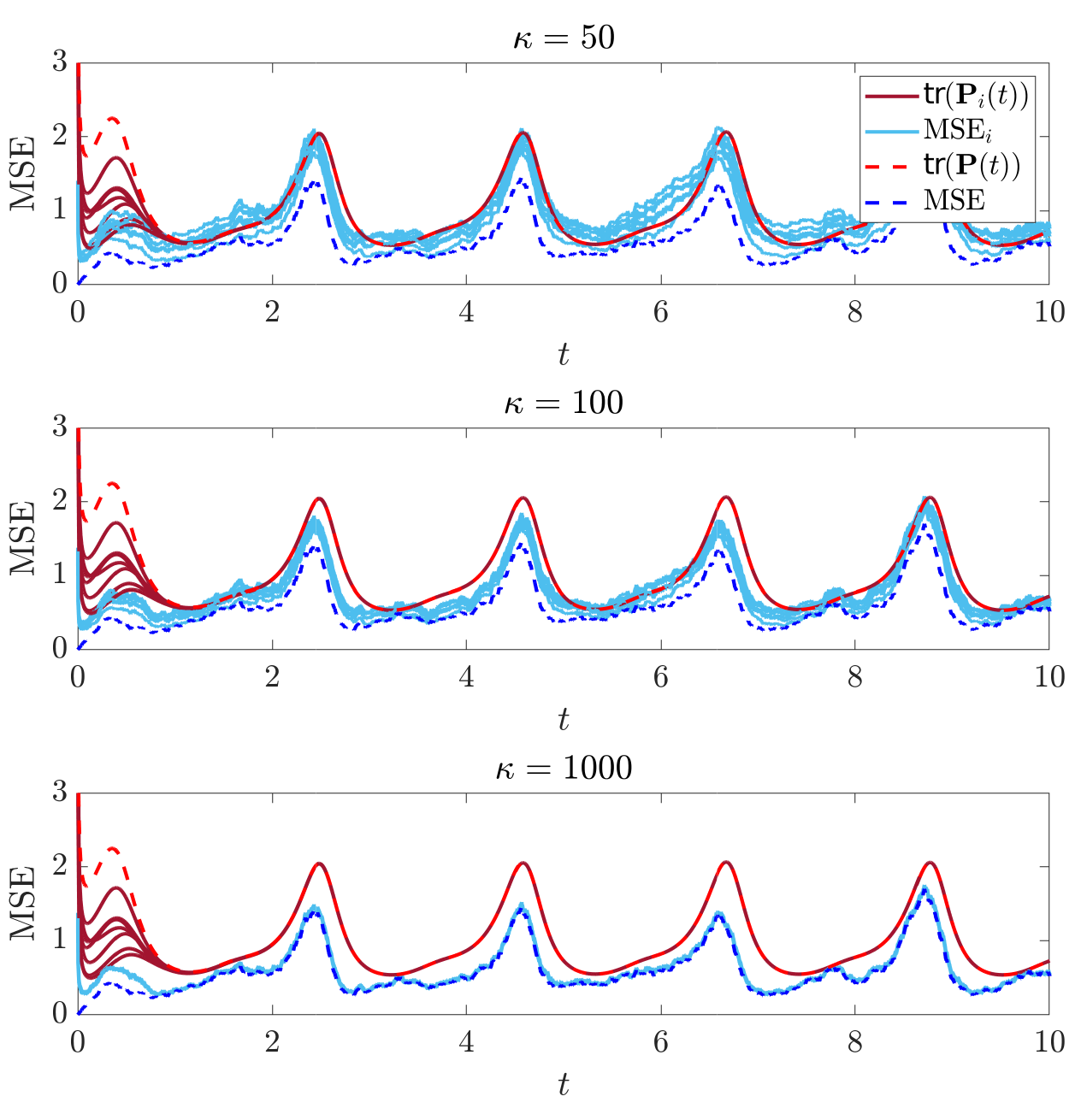}
    \caption{MSE over 100 noise realizations for ODEFTC (all nodes are plotted, continuous line) vs. for the centralized Kalman-Bucy filter (dashed line).}
    \label{fig:mse}
\end{figure}

Moreover, recall that ODEFTC subsumes the proposal of \cite{Battilotti2020} for the LTI case. In that work, a lower bound for a stabilizing $\kappa$ was also obtained. In addition to being applicable to LTV systems, we want to show that our bound for $\kappa$ is also less conservative than that of \cite{Battilotti2020} in the LTI case. Consider the same system matrices as before, but with
\begin{equation}
\mf{A} =   \begin{bmatrix}
		0 & 0 & 0.5 & 0\\	
		0 & 0 & 0 & 1\\
		0 & 0 & 0.5 & 0\\
		0 & 0 & 0 & 1\\
		\end{bmatrix},
\end{equation}
$\mf{R}_1 = \mf{R}_4 = \mf{R}_7 = 0.06$, $\mf{R}_3 = 0.03$, so that the system is now LTI.
The bound in \cite{Battilotti2020} is given by
\begin{equation}\label{eq:kappa-battilotti}
\begin{aligned}
    \kappa &> ({\| \mf{P}_\infty^{-1}\mf{A} + \mf{A}^\top \mf{P}_\infty^{-1} \| + \eta})/{\lambda_\mathcal{G}}, \\
    \eta &= 4N^2 \lambda_{\max}((\mf{P}_\infty^{-1}\mf{W}\mf{P}_\infty^{-1} + \mf{G})  \lambda_{\max}(\mf{G}),
\end{aligned}
\end{equation}
with $\mf{G} = \mf{C}^\top\mf{R}^{-1}\mf{C}$ and $\mf{P}_\infty$ being the steady-state solution of \eqref{eq:ckb} in the LTI case (see \cite[Theorem 5]{Kalman1961}).
Then, the bound from \cite{Battilotti2020} yields $\kappa > 12931.43$, while our bound from Corollary \ref{cor:kappa} results in $\kappa > 93.32$ for this example, providing a smaller lower bound to guarantee stability. 

\section{Conclusions}
In this technical note, we have proven the optimality of the ODEFTC algorithm proposed in \cite{odeftc} to provide distributed state estimates for LTV stochastic systems. It can recover the solution of the centralized Kalman-Bucy filter, which is the optimal filter in this context, as the value of the consensus gain $\kappa$ is increased. Moreover, we provided a sufficient value for $\kappa$ to stabilize the estimator, which can be easily computed a priori by taking into account the measurement model and the topology of the communication graph. This sufficient bound is also less conservative than the one proposed in prior work, allowing to guarantee the stability of the estimator for smaller values of the consensus gain.

\appendix 
\section*{Appendix}
In the subsequent steps, we will make use of the following property for the vectorization operator: $\vect(\mf{A} \mf{B} \mf{C}) = (\mf{C}^\top \otimes \mf{A}) \vect (\mf{B})$, for matrices $\mf{A}, \mf{B}, \mf{C}$ with appropriate dimensions.
Then, we can evaluate the terms of \eqref{eq:lyap-rho2} as follows:
\begin{equation}
\begin{aligned}
    &\bm{\uprho}(t)^\top (\mf{N}(t) \otimes \mf{N}(t)) \bm{\sigma}(t) \\&= \bar{\bm{\uprho}}(t)^\top (\mf{N}(t) \otimes \mf{N}(t)) \bm{\sigma}(t) + \tilde{\bm{\uprho}}(t)^\top (\mf{N}(t) \otimes \mf{N}(t)) \bm{\sigma}(t) \\&= \tilde{\bm{\uprho}}(t)^\top (\mf{N}(t) \otimes \mf{N}(t)) \bm{\sigma}(t)
\end{aligned}
\end{equation}
due to 
\begin{equation}
\begin{aligned}
    &\bar{\bm{\uprho}}(t)^\top (\mf{N}(t) \otimes \mf{N}(t)) \bm{\sigma}(t) \\&= \bm{\uprho}(t)^\top((\mf{U}_N \otimes \mf{I}_n) \otimes \mf{I}_{Nn}) (\mf{N}(t) \otimes \mf{N}(t)) \bm{\sigma}(t) \\
    &= \bm{\uprho}(t)^\top((\mf{U}_N \otimes \mf{I}_n)\mf{N}(t) \otimes \mf{N}(t)) \bm{\sigma}(t) \\
    &= \bm{\uprho}(t)^\top \vect(\mf{N}(t) \bm{\Sigma}(t) \mf{N}(t)(\mf{U}_N \otimes \mf{I}_n)),
\end{aligned}
\end{equation}
and noting that 
\begin{equation}
\begin{aligned}
    &\bm{\Sigma}(t) \mf{N}(t)(\mf{U}_N \otimes \mf{I}_{n}) = \bm{\Sigma}(t)(\mf{I}_N \otimes \mf{P}(t)^{-1})(\mf{U}_N \otimes \mf{I}_n) \\
    &= \bm{\Sigma}(t)(\mf{U}_N \otimes \mf{P}(t)^{-1}) = \bm{\Sigma}(t)(\mf{U}_N \otimes \mf{I}_n)(\mf{I}_N \otimes \mf{P}(t)^{-1}) \\
    &=\bm{\Sigma}(t)(\mathds{1}_N \otimes \mf{I}_n)(\mathds{1}^\top_N \otimes \mf{I}_n)(\mf{I}_N \otimes \mf{P}(t)^{-1}) = \mf{0}
\end{aligned}
\end{equation}
according to $\bm{\Sigma}(t)(\mathds{1}_N \otimes \mf{I}_n) = \mf{0}$ from Lemma \ref{lem:sigma-ort}.

We also have, naming $\mf{M}(t) = \mf{N}(t) \otimes \mf{H} \otimes \mf{I}_n + \mf{H} \otimes \mf{I}_n \otimes \mf{N}(t)$ for brevity,
\begin{equation}\label{eq:terms-rho}
\begin{aligned}
\bm{\uprho}(t)^\top\mf{M}(t)\bm{\uprho}(t) &= \bar{\bm{\uprho}}(t)^\top\mf{M}(t)\bar{\bm{\uprho}}(t) + \tilde{\bm{\uprho}}(t)^\top\mf{M}(t)\tilde{\bm{\uprho}}(t)  + 2 \tilde{\bm{\uprho}}(t)^\top\mf{M}(t)\bar{\bm{\uprho}}(t).
\end{aligned}
\end{equation}
We can show that all terms containing $\bar{\bm{\uprho}}(t)$ are zero. In particular, we have
\begin{equation}\label{eq:term-bar-rho}
\begin{aligned}
    &\mf{M}(t)\bar{\bm{\uprho}}(t) = (\mf{N}(t) \otimes \mf{H} \otimes \mf{I}_n) \bar{\bm{\uprho}}(t)  +  (\mf{H} \otimes \mf{I}_n \otimes \mf{N}(t)) \bar{\bm{\uprho}}(t),
\end{aligned}
\end{equation}
where
\begin{equation}\label{eq:barrho1}
\begin{aligned}
&(\mf{H} \otimes \mf{I}_n \otimes \mf{N}(t)) \bar{\bm{\uprho}}(t) \\ 
&= ((\mf{H} \otimes \mf{I}_n) \otimes \mf{N}(t)) ((\mf{U}_N \otimes \mf{I}_n) \otimes \mf{I}_{Nn}) {\bm{\uprho}}(t) \\
&= ((\mf{H} \otimes \mf{I}_n)(\mf{U}_N \otimes \mf{I}_n) \otimes \mf{N}(t)) {\bm{\uprho}}(t) \\
&= ((\mf{HU}_N \otimes \mf{I}_n) \otimes \mf{N}(t)) {\bm{\uprho}}(t) = \mf{0},
\end{aligned}
\end{equation}
recalling that $\mf{HU}_N = \mf{0}$. 
Defining $\bar{\mf{X}}(t)$ such that $\bar{\bm{\uprho}}(t) = \vect(\bar{\mf{X}}(t))$, note that 
\begin{equation}\label{eq:barrho2}
\begin{aligned}
&(\mf{H} \otimes \mf{I}_n \otimes \mf{N}(t)) \bar{\bm{\uprho}}(t) = \vect (\mf{N}(t) \bar{\mf{X}}(t) (\mf{H} \otimes \mf{I}_n)) = \mf{0} \\ &\implies \mf{N}(t) \bar{\mf{X}}(t) (\mf{H} \otimes \mf{I}_n) = \mf{0}  \\ &\implies (\mf{N}(t) \bar{\mf{X}}(t) (\mf{H} \otimes \mf{I}_n))^\top = (\mf{H} \otimes \mf{I}_n) \bar{\mf{X}}(t) \mf{N}(t) = \mf{0} \\& \implies \vect ((\mf{H} \otimes \mf{I}_n) \bar{\mf{X}}(t) \mf{N}(t)) = (\mf{N}(t) \otimes \mf{H} \otimes \mf{I}_n)\bar{\bm{\uprho}}(t) =  \mf{0},
\end{aligned}
\end{equation}
where we have used the fact that the matrices involved are symmetric. Taking into account \eqref{eq:barrho1}-\eqref{eq:barrho2}, we have that $\mf{M}(t)\bar{\bm{\uprho}}(t) = \mf{0}$, and therefore the terms $\bar{\bm{\uprho}}(t)^\top\mf{M}(t)\bar{\bm{\uprho}}(t)$ and $\tilde{\bm{\uprho}}(t)^\top\mf{M}(t)\bar{\bm{\uprho}}(t)$ in \eqref{eq:terms-rho} are also zero.

Finally, only the disagreement term remains, with
\begin{equation}
\begin{aligned}
\tilde{\bm{\uprho}}(t)^\top\mf{M}(t)\tilde{\bm{\uprho}}(t) &= \tilde{\bm{\uprho}}(t)^\top (\mf{N}(t) \otimes \mf{H} \otimes \mf{I}_n) \tilde{\bm{\uprho}}(t)   + \tilde{\bm{\uprho}}(t)^\top (\mf{H} \otimes \mf{I}_n \otimes \mf{N}(t)) \tilde{\bm{\uprho}}(t).
\end{aligned}
\end{equation}
Note that
\begin{equation}
\begin{aligned}
&\tilde{\bm{\uprho}}(t)^\top (\mf{H} \otimes \mf{I}_n \otimes \mf{N}(t)) \\
&= \bm{\uprho}(t)^\top ((\mf{H} \otimes \mf{I}_n) \otimes \mf{I}_{Nn}) ((\mf{H} \otimes \mf{I}_n) \otimes \mf{N}(t)) \\
&= \bm{\uprho}(t)^\top ((\mf{H} \otimes \mf{I}_n)(\mf{H} \otimes \mf{I}_n) \otimes \mf{N}(t)) \\
&= \bm{\uprho}(t)^\top ((\mf{H} \otimes \mf{I}_n) \otimes \mf{N}(t)) \\
&= \bm{\uprho}(t)^\top ((\mf{H} \otimes \mf{I}_n) \otimes \mf{I}_{Nn}) (\mf{I}_{Nn}\otimes \mf{N}(t)) \\
&= \tilde{\bm{\uprho}}(t)^\top(\mf{I}_{Nn}\otimes \mf{N}(t)),
\end{aligned}
\end{equation}
where we have used the fact that $\mf{H}^2 = \mf{H}$. Moreover, we have the following equivalences, letting $\tilde{\bm{\uprho}}(t) = \vect (\tilde{\mf{X}}(t))$:
\begin{equation}
\begin{aligned}
&(\tilde{\bm{\uprho}}(t)^\top (\mf{H} \otimes \mf{I}_n \otimes \mf{N}(t)))^\top = (\tilde{\bm{\uprho}}(t)^\top(\mf{I}_{Nn}\otimes \mf{N}(t)))^\top \\
& \implies (\mf{H} \otimes \mf{I}_n \otimes \mf{N}(t)) \tilde{\bm{\uprho}}(t) = (\mf{I}_{Nn}\otimes \mf{N}(t)) \tilde{\bm{\uprho}}(t) \\
&\implies \vect(\mf{N}(t) \tilde{\mf{X}}(t) (\mf{H}\otimes \mf{I}_n)) = \vect(\mf{N}(t) \tilde{\mf{X}}(t)) \\
&\implies (\mf{N}(t) \tilde{\mf{X}}(t) (\mf{H}\otimes \mf{I}_n))^\top = (\mf{N}(t) \tilde{\mf{X}}(t))^\top \\
&\implies ((\mf{H}\otimes \mf{I}_n) \tilde{\mf{X}}(t) \mf{N}(t)) = \tilde{\mf{X}}(t) \mf{N}(t) \\
&\implies  \vect((\mf{H}\otimes \mf{I}_n) \tilde{\mf{X}}(t) \mf{N}(t)) = \vect(\tilde{\mf{X}}(t) \mf{N}(t)) \\
&\implies (\mf{N}(t) \otimes (\mf{H} \otimes \mf{I}_n))\tilde{\bm{\uprho}}(t) = (\mf{N}(t) \otimes \mf{I}_{Nn}) \tilde{\bm{\uprho}}(t). 
\end{aligned}
\end{equation}
With these results, and recalling that the other terms become zero, we have
\begin{equation}
\begin{aligned}
\bm{\uprho}(t)^\top\mf{M}(t)\bm{\uprho}(t) &= \tilde{\bm{\uprho}}(t)^\top (\mf{N}(t) \otimes \mf{I}_{Nn} + \mf{I}_{Nn} \otimes \mf{N}(t)) \tilde{\bm{\uprho}}(t).
\end{aligned}
\end{equation}


\bibliographystyle{IEEEtran}
\bibliography{biblio.bib}

\end{document}